\documentclass[11pt,dvips,twoside,letterpaper]{article}

\usepackage{pslatex}
\usepackage{fancyhdr}
\usepackage{graphicx}
\usepackage{geometry}

\def\figurename{Figure} 
\makeatletter
\renewcommand{\fnum@figure}[1]{\figurename~\thefigure.}
\makeatother

\def\tablename{Table} 
\makeatletter
\renewcommand{\fnum@table}[1]{\tablename~\thetable.}
\makeatother

\usepackage{amsmath}
\usepackage{amssymb}
\usepackage{amsfonts}
\usepackage{amsthm,amscd}

\newtheorem{theorem}{Theorem}[section]

\newtheorem{proposition}[theorem]{Proposition}
\theoremstyle{definition}

\theoremstyle{remark}
\newtheorem{remark}[theorem]{Remark}

\numberwithin{equation}{section}

\setlength{\topmargin}{-0.35in}
\setlength{\textheight}{8.5in}   
\setlength{\textwidth}{5.5in}    
\setlength{\oddsidemargin}{0.5in}
\setlength{\evensidemargin}{0.5in} \setlength{\headheight}{36pt}
\setlength{\headsep}{8pt}

\begin{document}
\title{\bfseries\scshape{Factorization of linear and nonlinear differential operators: necessary and sufficient conditions}}
\author{\bfseries\scshape Mahouton Norbert Hounkonnou\thanks{E-mail address: norbert.hounkonnou@cipma.uac.bj}\\
International Chair in Mathematical Physics
and Applications\\ (ICMPA-UNESCO Chair)\\University of Abomey-Calavi\\072 B.P. 50  Cotonou, Republic of Benin\\ \\
\bfseries\scshape Pascal Dkengne Sielenou\thanks{E-mail address: sielenou$\_$alain@yahoo.fr}\\
International Chair in Mathematical Physics
and Applications\\ (ICMPA-UNESCO Chair)\\University of Abomey-Calavi\\072 B.P. 50  Cotonou, Republic of Benin\\ \\
 \\ \\}

\date{}
\maketitle \thispagestyle{empty} \setcounter{page}{1}

\thispagestyle{fancy} \fancyhead{}
\renewcommand{\headrulewidth}{0pt}

\begin{abstract} \noindent
An algebraic approach for  factorizing  nonlinear partial differential equations (PDEs) and systems of PDEs is provided.
 In the particular case of second order linear and nonlinear
  PDEs and systems of PDEs, necessary and sufficient conditions of factorization are given.
\end{abstract}

\noindent {\bf AMS Subject Classification:} 34-01, 34A30, 34A34, 35-01, 47E05, 47F05.

\vspace{.08in} \noindent \textbf{Keywords}: Linear differential equations; nonlinear differential equations; factorization.

\section{Introduction}

The search for exact solutions of differential equations is very challenging in mathematics, but their usefulness   in the proper understanding of
qualitative features of  phenomena and processes in various areas of natural science merits to get down to such an investigation.
Indeed, exact solutions can be used
to verify the consistency and estimate errors of various numerical, asymptotic and approximate analytical methods.
Unfortunately, there does not always exist a method adapted for the resolution of any  type of differential equations.
Very often, one tries to  reduce the equation in order to make easier its resolution.
But this reduction requires the knowledge of suitable transformations or changes of  variables. The latters
  usually give rise to another problem the issue of which
 is not always favourable.

A simple approach for the reduction of a differential equation consists in seeking a factorization, if there exists,
of the differential operator associated with it.
Note that for the particular case of second order linear  ordinary differential equations   of   Schr\"{o}dinger or Sturm-Liouville type,
the factorization of the associated differential operators also allows to obtain partially or completely their spectrum,
under certain assumptions of integrability \cite{r1,r2,r3,r4}.
In recent years, there has been much interest devoted to  the problem of factorization of  differential equations, especially
based on linear  ordinary
 \cite{r5,r6,r07} and nonlinear differential operators \cite{r7,r07,r8}.  Although effective, the used methods  are rather restrictive in their applications.

Recently, a  purely algebraic method of factorization of the second order
linear  ordinary differential equations  has been presented by the
authors in \cite{r9,r10,r11,r12}.
The same procedure of factorization has been exploited in \cite{r13} and  extended to  second order
nonlinear  ordinary differential equations (NLODES) and systems of NLODEs.
This work generalizes previous  works   by applying the above mentioned algebraic method of factorization
to linear and nonlinear systems of partial differential equations (PDEs). Necessary and sufficient conditions of factorization
are derived in the case of
 second order equations.

First of all, some useful notations are required.
Consider $X,$ an $n$-dimensional independent variable space, and $U,$
 an $m$-dimensional dependent variable space.
 Let $x=\left(x^1,\cdots,x^n\right)\in X$ and $u=\left(u^1,\cdots,u^m\right)\in U.$
  We define  the space $U^{(s)},$ $s\in \mathbb{N}$ as:
  \begin{equation}
   U^{(s)}:=\left\{u^{(s)}\,:\,\,u^{(s)}=\bigotimes_{j=1}^{m}\left(\bigotimes_{k=0}^{s}u^j_{(k)}  \right)\right\},
  \end{equation}
 where $u^j_{(k)}$ is the
 \begin{equation}\label{sa1}
 p_k= n^k
 \end{equation}
  of all  $k$-th order partial derivatives of $u^j.$
The   $u^j_{(k)}$ vector components are recursively obtained  as follows:
 \begin{itemize}
 \item[i)] $u^j_{(0)}=u^j$ and $u^j_{(1)}=\left(u^j_{x^1},u^j_{x^2},\cdots,u^j_{x^n}\right);$
 \item[ii)] Assume that  $u^j_{(k)}$ is known. Then,
 \begin{itemize}
\item Form the tuples $\widetilde{u}^j_{(k+1)}(l)$ as follows:
 $$\widetilde{u}^j_{(k+1)}(l)=\left(\frac{\partial}{\partial x^1}u^j_{(k)}[l], \frac{\partial}{\partial x^2}u^j_{(k)}[l],\cdots,
\frac{\partial}{\partial x^n}u^j_{(k)}[l]   \right),\quad l=1,2,\cdots,p_k,$$
 where $u^j_{(k)}[l]$ is the $l$-th component of the vector $u^j_{(k)};$
\item Finally, form the vector
$$u^j_{(k+1)}=\left( \widetilde{u}^j_{(k+1)}(1),\widetilde{u}^j_{(k+1)}(2),\cdots,\widetilde{u}^j_{(k+1)}(p_k)   \right).  $$
\end{itemize}
 \end{itemize}
  An element   $u^{(s)},$ in the space $U^{(s)},$ is the
  \begin{equation}\label{sa2}
  q_s= m(1+p_1+p_2+\cdots+p_s)\emph{\emph{-tuple}}
  \end{equation}
  defined by
\begin{equation}\label{sa3}
u^{(s)}=\left(u^1_{(0)},u^1_{(1)},\cdots,u^1_{(s)}, u^2_{(0)},u^2_{(1)},\cdots,u^2_{(s)},\cdots,u^m_{(0)},u^m_{(1)},\cdots,u^m_{(s)}  \right).
\end{equation}
The coordinates in the space $X\times U^{(s)}$   are
denoted by $\left(x,u^{(s)}\right).$ \\
In the sequel, the $q_s$-uple $u^{(s)}$ will be referred to (\ref{sa3}), whereas the integers $p_k$ and $q_s$ are  defined by (\ref{sa1}) and (\ref{sa2}), respectively.
Define differential operators $\emph{\emph{D}}_{k,h}$   whose action on a regular  function $u$ is
\begin{equation}\label{eqa1}
\emph{\emph{D}}_{k,h} \,u=u_{(k)}[h]
\end{equation}
These  operators
$\emph{\emph{D}}_{k,h}$ satisfy the following properties:
\begin{itemize}
\item[(i)] $\emph{\emph{D}}_{0,1}\,u=u$ $\qquad$  (identity),
\item[(ii)] $\emph{\emph{D}}_{1,h} \,\emph{\emph{D}}_{k',h'} \,u=\emph{\emph{D}}_{k'+1,n(h'-1)+h}\,u$  $\qquad$ (composition rule),
\item[(iii)] $\emph{\emph{D}}_{k,h}\,u=\emph{\emph{D}}_{1,h}\,\emph{\emph{D}}_{k-1,1}\,u,$ $\quad k\geq 1$ $\qquad$ (decomposition rule).
\end{itemize}
\begin{remark}
Operators $\emph{\emph{D}}_{k,h}$ allow the simplification of the writing of certain differential operators. For example,
the operator
$$
\mathcal{T}=\sum_{l_1+l_2+\cdots+l_n=0}^s\frac{\partial^{l_1+l_2+\cdots+l_n}}{\left(\partial x^1  \right)^{l_1}\left(\partial x^2  \right)^{l_2}\cdots \left(\partial x^n  \right)^{l_n}}
$$
can  be shortly expressed as
$$
\mathcal{T}=\sum_{k=0}^{s}\sum_{h=1}^{p_k}\emph{D}_{k,h}.
$$
\end{remark}

\pagestyle{fancy} \fancyhead{} \fancyhead[EC]{M. N. Hounkonnou and P. A.  Dkengne } \fancyhead[EL,OR]{\thepage}
\fancyhead[OC]{Factorization of  differential operators}
\fancyfoot{}
\renewcommand\headrulewidth{0.5pt}

\section{Linear  differential operators}
In this section, we develop an algebraic method of factorization applicable to linear differential operators (LDOs) and to systems of LDOs.

\subsection{Factorizations of linear  differential equations}
The general setting of the factorization problem for LDOs is developed. Necessary and sufficient conditions are derived for the factorization of second order linear
ordinary and partial differential operators with two independent variables.
\subsubsection{General setting}

Let $s\geq 2$ be a positive integer and $\Lambda$ be an open subset of $\mathbb{R}^n.$
Let
\begin{equation}\label{eqa2}
\mathcal{P}(s)=\sum_{k=0}^{s}\sum_{h=1}^{p_k}g_{k,h}(x)\emph{\emph{D}}_{k,h}
\end{equation}
a linear differential operator of order $s,$ where $g_{k,h}\in \mathcal{C}(\Lambda,\mathbb{R}).$
The operator $\mathcal{P}(s)$ acts on a function $u\in \mathcal{C}^{s}(\Lambda,\mathbb{R})$ as follows
\begin{equation}\label{eqa3}
\mathcal{P}(s)\,u=\sum_{k=0}^{s}\sum_{h=1}^{p_k}g_{k,h}(x)\emph{\emph{D}}_{k,h}\,u.
\end{equation}
The method of factorization consists in seeking a decomposition of the differential operator (\ref{eqa2})
in the following form
\begin{equation}\label{eqa4}
\mathcal{P}(s)=\prod_{i=1}^{l}\mathcal{Q}_i(s_i)
\end{equation}
with $\sum_{i=1}^{l}s_i=s$ and
\begin{equation}\label{eqa5}
\mathcal{Q}_i(s_i)=\sum_{k=0}^{s_i}\sum_{h=1}^{p_k}b_{i,k,h}(x)\emph{\emph{D}}_{k,h},
\end{equation}
where $b_{1,k,h}\in \mathcal{C}(\Lambda,\mathbb{R})$  and  $b_{i,k,h}\in \mathcal{C}^{\sum_{j=1}^{i-1}s_j}(\Lambda,\mathbb{R}),$
$i=2,3, \cdots,l.$
\begin{proposition}\label{pn1}
Let $\mathcal{P}(s)$ be an operator which can be decomposed into the form
 (\ref{eqa4}). If the function $u_0$  satisfies
\begin{equation}\label{eqa6}
\mathcal{Q}_l(s_l)u_0=0,
\end{equation}
and $u_1,\,\ldots,\,u_{l-1}$ are solutions of the system
\begin{eqnarray}
 \label{eqa7}
  \prod_{k=l-j+1}^{l}\mathcal{Q}_{k}(s_k)u_j&=& v_{j},\;j=1,\,2,\ldots,\,l-1,
\end{eqnarray}
where $v_j,$ $j=1,\,2,\ldots,\,l-1,$ are solutions of
\begin{eqnarray}
\label{eqa8}
\prod_{i=1}^{l-j}\mathcal{Q}_i(s_i)v_j&=&0,
\end{eqnarray}
then $u_0,$ $u_1,\,\ldots,\,u_{l-1}$ are $l$ particular solutions of the equation
$\mathcal{P}(s)\,u=0.$
\end{proposition}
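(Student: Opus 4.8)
The plan is to exploit the associativity of operator composition together with the prescribed factorization (\ref{eqa4}), reducing everything to careful bookkeeping of the index ranges in the ordered product $\prod_{i=1}^{l}\mathcal{Q}_i(s_i)$. The argument separates into the base case $u_0$ and the general case $u_j$ for $1\le j\le l-1$.

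First I would treat $u_0$. Since $\mathcal{Q}_l(s_l)$ is the rightmost factor in (\ref{eqa4}), I write $\mathcal{P}(s)=\left(\prod_{i=1}^{l-1}\mathcal{Q}_i(s_i)\right)\mathcal{Q}_l(s_l)$, apply $\mathcal{P}(s)$ to $u_0$, and invoke (\ref{eqa6}):
\[
\mathcal{P}(s)u_0=\left(\prod_{i=1}^{l-1}\mathcal{Q}_i(s_i)\right)\left(\mathcal{Q}_l(s_l)u_0\right)=\left(\prod_{i=1}^{l-1}\mathcal{Q}_i(s_i)\right)0=0,
\]
so $u_0$ solves $\mathcal{P}(s)u=0$.

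For the general case, the key observation is that, for each fixed $j$, the ordered product defining $\mathcal{P}(s)$ splits into its first $l-j$ factors and its last $j$ factors,
\[
\mathcal{P}(s)=\left(\prod_{i=1}^{l-j}\mathcal{Q}_i(s_i)\right)\left(\prod_{k=l-j+1}^{l}\mathcal{Q}_k(s_k)\right).
\]
Applying this to $u_j$ and inserting (\ref{eqa7}) collapses the right-hand block to $v_j$, after which (\ref{eqa8}) annihilates $v_j$:
\[
\mathcal{P}(s)u_j=\left(\prod_{i=1}^{l-j}\mathcal{Q}_i(s_i)\right)\left(\prod_{k=l-j+1}^{l}\mathcal{Q}_k(s_k)u_j\right)=\left(\prod_{i=1}^{l-j}\mathcal{Q}_i(s_i)\right)v_j=0.
\]
Since this holds for every $j=1,2,\ldots,l-1$, together with the base case it furnishes the $l$ solutions $u_0,u_1,\ldots,u_{l-1}$ of $\mathcal{P}(s)u=0$.

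I expect the only delicate point to be the non-commutativity of operator composition: the reduction works precisely because the factorization is \emph{ordered} and I group the last $j$ factors before applying (\ref{eqa7}). I would therefore state explicitly that $\prod$ denotes left-to-right composition with increasing index, so that splitting the index set $\{1,\ldots,l\}$ into $\{1,\ldots,l-j\}$ and $\{l-j+1,\ldots,l\}$ preserves the order of the factors. No regularity obstruction arises, since the smoothness assumptions on the coefficients $b_{i,k,h}$ in (\ref{eqa5}) guarantee that each intermediate composition is well defined on the relevant function space.
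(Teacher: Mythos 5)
Your proof is correct and follows essentially the same route as the paper's: split the ordered product $\prod_{i=1}^{l}\mathcal{Q}_i(s_i)$ into the first $l-j$ and last $j$ factors, collapse the right block to $v_j$ via (\ref{eqa7}), and annihilate it via (\ref{eqa8}), with the $u_0$ case handled separately by (\ref{eqa6}). Your explicit remark on the ordering convention for the product is a sensible clarification but adds nothing beyond what the paper's argument already uses implicitly.
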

\begin{proof}
Let $u_0$ and $u_j,$ $j=1,\,2,\ldots,\,l-1$ be solutions of (\ref{eqa6}) and (\ref{eqa7}), respectively.
Then
$$ \mathcal{P}(s)u_0=\left(\prod_{i=1}^{l-1}\mathcal{Q}_i(s_i)\right)\mathcal{Q}_l(s_l)u_0=0,$$
and for  $j=1,\,2,\ldots,\,l-1,$
\begin{eqnarray}
 \mathcal{P}(s)u_j&=&\left(\prod_{i=1}^{l-j}\mathcal{Q}_i(s_i)\right)\left(\prod_{k=l-j+1}^{l}\mathcal{Q}_k(s_k)\right)u_j\nonumber\\
 \label{eqp5}
 &=&\prod_{i=1}^{l-j}\mathcal{Q}_i(s_i)v_j=0,\nonumber
\end{eqnarray}
where the use of (\ref{eqa7}) and (\ref{eqa8}) has been made.
\end{proof}
Expanding (\ref{eqa4}) leads to the relations between unknown
functions $b_{i,k,h}$ of the differential operators
$\mathcal{Q}_i(s_i)$ and the known functions $g_{k,h}$ of
the original differential operator $\mathcal{P}(s).$
Without loss of generality and as matter of clarity, this study will be concentrated to second order equations, the generalization being straightforward.

\subsubsection{Necessary and sufficient conditions for the factorization of second order linear ODEs}

Let $\Lambda$ and $\Lambda_0$ be two open subsets of $\mathbb{R}$ such that $\Lambda_0 \subset  \Lambda.$
Consider the second order linear ordinary differential operator
\begin{eqnarray}\label{eqa9}
\mathcal{P}(2)&=&\sum_{k=0}^{2}\sum_{h=1}^{p_k}g_{k,h}(x)\emph{\emph{D}}_{k,h}\nonumber\\
&=& g_{0,1}(x)\emph{\emph{D}}_{0,1}+g_{1,1}(x)\emph{\emph{D}}_{1,1}
+g_{2,1}(x)\emph{\emph{D}}_{2,1},
\end{eqnarray}
where $g_{k,h}\in \mathcal{C}(\Lambda,\mathbb{R})$ and  $x=x^1.$
Write $\mathcal{P}(2)$ in the form
\begin{eqnarray}\label{eqa10}
\mathcal{P}(2)&=&\mathcal{Q}_1(1)\cdot\mathcal{Q}_2(1)\nonumber\\
&=&\left[\sum_{k=0}^{1}\sum_{h=1}^{p_k}b_{1,k,h}(x)\emph{\emph{D}}_{k,h}\right]
\left[\sum_{k=0}^{1}\sum_{h=1}^{p_k}b_{2,k,h}(x)\emph{\emph{D}}_{k,h}\right]\nonumber\\
&=&\left[ b_{1,0,1}(x)\emph{\emph{D}}_{0,1}+b_{1,1,1}(x)\emph{\emph{D}}_{1,1}  \right]
\left[  b_{2,0,1}(x)\emph{\emph{D}}_{0,1}+b_{2,1,1}(x)\emph{\emph{D}}_{1,1} \right],
\end{eqnarray}
where $b_{1,k,h}\in \mathcal{C}(\Lambda,\mathbb{R})$  and  $b_{2,k,h}\in \mathcal{C}^{1}(\Lambda,\mathbb{R}).$
Let $u\in \mathcal{C}^2(\Lambda_0,\mathbb{R}).$ Then we have
\begin{eqnarray}\label{eqa11}
\mathcal{P}(2)\,u
&=& g_{0,1}(x)\,u+g_{1,1}(x)\,u_x
+g_{2,1}(x)\,u_{2x}
\end{eqnarray}
and after expansion
\begin{eqnarray}\label{eqa12}
\mathcal{P}(2)\,u&=&\left[ b_{1,0,1}(x)\emph{\emph{D}}_{0,1}+b_{1,1,1}(x)\emph{\emph{D}}_{1,1}  \right]
\left[  b_{2,0,1}(x)\emph{\emph{D}}_{0,1}+b_{2,1,1}(x)\emph{\emph{D}}_{1,1} \right]u\nonumber\\
&=&b_{1,1,1}b_{2,1,1}\,u_{2x}
+\left[ b_{1,0,1}b_{2,1,1}+b_{1,1,1}b_{2,0,1}+b_{1,1,1}\emph{\emph{D}}_{1,1}(b_{2,1,1})  \right]u_x\nonumber\\
&+&\left[ b_{1,0,1}b_{2,0,1}+b_{1,1,1}\emph{\emph{D}}_{1,1}(b_{2,0,1})  \right]u.
\end{eqnarray}
Identifying (\ref{eqa11}) with (\ref{eqa12}) yields
\begin{proposition}
A necessary and sufficient condition for the differential operator $\mathcal{P}(2) $ defined by (\ref{eqa9})  be decomposed into
the form (\ref{eqa10}) is:
\begin{eqnarray}
g_{2,1} &=& b_{1,1,1}b_{2,1,1},
\label{eqa13}\\
g_{1,1} &=& b_{1,0,1}b_{2,1,1}+b_{1,1,1}b_{2,0,1}+b_{1,1,1}\emph{\emph{D}}_{1,1}(b_{2,1,1}) ,\label{eqa14}\\
g_{0,1} &=& b_{1,0,1}b_{2,0,1}+b_{1,1,1}\emph{\emph{D}}_{1,1}(b_{2,0,1}).\label{eqa15}
\end{eqnarray}
\end{proposition}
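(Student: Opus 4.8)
The plan is to prove both implications at once by reducing the operator identity $\mathcal{P}(2)=\mathcal{Q}_1(1)\cdot\mathcal{Q}_2(1)$ to a system of identities between the coefficient functions attached to $u$, $u_x$ and $u_{2x}$. By definition, the factorization (\ref{eqa10}) asserts that the two operators agree when applied to \emph{every} $u\in \mathcal{C}^2(\Lambda_0,\mathbb{R})$, so the whole argument hinges on expanding the composite $\mathcal{Q}_1(1)\mathcal{Q}_2(1)u$ and then matching it, coefficient by coefficient, against $\mathcal{P}(2)u$.

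First I would carry out the expansion of the right-hand side of (\ref{eqa10}) acting on $u$. Applying the outer factor to each term produced by the inner factor, I use the composition rule (ii) with $n=1$, which gives $\emph{\emph{D}}_{1,1}\,\emph{\emph{D}}_{1,1}\,u=\emph{\emph{D}}_{2,1}\,u=u_{2x}$, together with the Leibniz rule in the form $\emph{\emph{D}}_{1,1}(b\,u)=\emph{\emph{D}}_{1,1}(b)\,u+b\,\emph{\emph{D}}_{1,1}(u)$ applied to the products $b_{2,0,1}u$ and $b_{2,1,1}u_x$. Collecting terms by the order of the surviving derivative of $u$ reproduces exactly (\ref{eqa12}): the coefficient of $u_{2x}$ is $b_{1,1,1}b_{2,1,1}$, the coefficient of $u_x$ is $b_{1,0,1}b_{2,1,1}+b_{1,1,1}b_{2,0,1}+b_{1,1,1}\emph{\emph{D}}_{1,1}(b_{2,1,1})$, and the coefficient of $u$ is $b_{1,0,1}b_{2,0,1}+b_{1,1,1}\emph{\emph{D}}_{1,1}(b_{2,0,1})$.

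Next I would compare this with (\ref{eqa11}), whose coefficients of $u_{2x}$, $u_x$ and $u$ are $g_{2,1}$, $g_{1,1}$ and $g_{0,1}$. The equality of the two operators for every admissible $u$ is equivalent to the pointwise coincidence of these coefficient functions on $\Lambda_0$. To justify this reduction I would invoke the independence of the jet data: given any $x_0\in\Lambda_0$ and any prescribed values for $u$, $u_x$, $u_{2x}$ at $x_0$, the quadratic $u(x)=a+b(x-x_0)+c(x-x_0)^2$ realizes them, so a linear combination of $u$, $u_x$, $u_{2x}$ with continuous coefficients can vanish for all $u$ only if each coefficient vanishes. This forces (\ref{eqa13})--(\ref{eqa15}); conversely, if (\ref{eqa13})--(\ref{eqa15}) hold, then (\ref{eqa11}) and (\ref{eqa12}) coincide termwise, so $\mathcal{P}(2)$ admits the factorization (\ref{eqa10}).

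The only genuinely delicate point will be this final identification step, namely the legitimacy of passing from equality of operators to equality of coefficients; the test-function argument above settles it. The expansion itself is routine once the composition and Leibniz rules are in hand, and it has in fact already been displayed in (\ref{eqa12}), so the substance of the proof lies entirely in the independence of $\{u,u_x,u_{2x}\}$ at each point of $\Lambda_0$.
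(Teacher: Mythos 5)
Your proof follows the same route as the paper: expand $\mathcal{Q}_1(1)\,\mathcal{Q}_2(1)\,u$ via the Leibniz and composition rules and identify the coefficients of $u$, $u_x$, $u_{2x}$ with those of $\mathcal{P}(2)\,u$, which is exactly the computation the paper displays in (\ref{eqa11})--(\ref{eqa12}) before stating the proposition. Your only addition is the explicit test-function argument (quadratics realizing arbitrary values of $u$, $u_x$, $u_{2x}$ at a point) justifying why equality of the operators on all of $\mathcal{C}^2(\Lambda_0,\mathbb{R})$ forces pointwise equality of the coefficients; the paper leaves this step implicit, and your justification of it is correct.
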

Propose an approach to solve system (\ref{eqa13})-(\ref{eqa15}). Assume that $g_{2,1}$ does not  vanish on $\Lambda.$
Thus, it is always possible to find two nonzero functions on $\Lambda,$ namely $b_{1,1,1}$ and $b_{2,1,1},$ which satisfy (\ref{eqa13}).
Substituting $X=b_{1,0,1}$ and $Y=b_{2,0,1}$  in (\ref{eqa14}) gives
\begin{equation}\label{eqa16}
X=\frac{1}{b_{2,1,1}}\left[ g_{1,1}- b_{1,1,1}\emph{\emph{D}}_{1,1}(b_{2,1,1})- b_{1,1,1}Y  \right].
\end{equation}
The substitution of (\ref{eqa16}) into (\ref{eqa15}) implies that the decomposition (\ref{eqa10})  is strongly related
to the existence of a solution to the following Riccati equation in $Y$
 \begin{equation}\label{eqa17}
\emph{\emph{D}}_{1,1}(Y)-\frac{b_{1,1,1}}{g_{2,1}}Y^2+\frac{g_{1,1}-b_{1,1,1}\emph{\emph{D}}_{1,1}(b_{2,1,1})}{g_{2,1}}Y-\frac{g_{0,1}}{b_{1,1,1}}=0.
\end{equation}

\subsubsection{Necessary and sufficient conditions for the factorization of second order linear PDEs with two independent variables}

Let $\Lambda$ and $\Lambda_0$ be two open subsets of $\mathbb{R}^2$ such that $\Lambda_0 \subset  \Lambda.$
Consider the second order linear partial differential operator
\begin{eqnarray}\label{eqaa9}
\mathcal{P}(2)&=&\sum_{k=0}^{2}\sum_{h=1}^{p_k}g_{k,h}(x)\emph{\emph{D}}_{k,h}\nonumber\\
&=& g_{0,1}(x)\emph{\emph{D}}_{0,1}+g_{1,1}(x)\emph{\emph{D}}_{1,1}+g_{1,2}(x)\emph{\emph{D}}_{1,2}\nonumber\\
&+&g_{2,1}(x)\emph{\emph{D}}_{2,1}+g_{2,2}(x)\emph{\emph{D}}_{2,2}+g_{2,3}(x)\emph{\emph{D}}_{2,3}+g_{2,4}(x)\emph{\emph{D}}_{2,4},
\end{eqnarray}
where $g_{k,h}\in \mathcal{C}(\Lambda,\mathbb{R})$ and $x=\left(x^1,x^2\right).$
Write $\mathcal{P}(2)$ in the form
\begin{eqnarray}\label{eqaa10}
\mathcal{P}(2)&=&\mathcal{Q}_1(1)\cdot\mathcal{Q}_2(1)\nonumber\\
&=&\left[\sum_{k=0}^{1}\sum_{h=1}^{p_k}b_{1,k,h}(x)\emph{\emph{D}}_{k,h}\right]\left[\sum_{k=0}^{1}\sum_{h=1}^{p_k}b_{2,k,h}(x)\emph{\emph{D}}_{k,h}\right]\nonumber\\
&=&\left[ b_{1,0,1}(x)\emph{\emph{D}}_{0,1}+b_{1,1,1}(x)\emph{\emph{D}}_{1,1} +b_{1,1,2}(x)\emph{\emph{D}}_{1,2} \right]\nonumber\\
&\times&
\left[  b_{2,0,1}(x)\emph{\emph{D}}_{0,1}+b_{2,1,1}(x)\emph{\emph{D}}_{1,1} +b_{2,1,2}(x)\emph{\emph{D}}_{1,2}\right],
\end{eqnarray}
where $b_{1,k,h}\in \mathcal{C}(\Lambda,\mathbb{R})$  and  $b_{2,k,h}\in \mathcal{C}^{1}(\Lambda,\mathbb{R}).$
Let $u\in \mathcal{C}^2(\Lambda_0,\mathbb{R}).$ Then we have
\begin{eqnarray}\label{eqaa11}
\mathcal{P}(2)\,u
&=& g_{0,1}\,u+g_{1,1}\,u_{x^1}+g_{1,2}\,u_{x^2}
+g_{2,1}\,u_{2x^1}+\left(g_{2,2}+g_{2,3}\right)\,u_{x^1x^2}+g_{2,4}\,u_{2x^2}
\end{eqnarray}
and after expansion
\begin{eqnarray}\label{eqaa12}
\mathcal{P}(2)\,u&=&\left[ b_{1,0,1}(x)\emph{\emph{D}}_{0,1}+b_{1,1,1}(x)\emph{\emph{D}}_{1,1} +b_{1,1,2}(x)\emph{\emph{D}}_{1,2} \right]\nonumber\\
&\times&
\left[  b_{2,0,1}(x)\emph{\emph{D}}_{0,1}+b_{2,1,1}(x)\emph{\emph{D}}_{1,1} +b_{2,1,2}(x)\emph{\emph{D}}_{1,2}\right]u\nonumber\\
&=&\left[ b_{1,0,1}b_{2,0,1}+b_{1,1,1}\emph{\emph{D}}_{1,1}(b_{2,0,1})+b_{1,1,2}\emph{\emph{D}}_{1,2}(b_{2,0,1}) \right]\,u\nonumber\\
&+&\left[  b_{1,0,1}b_{2,1,1}+b_{1,1,1}b_{2,0,1}+b_{1,1,1}\emph{\emph{D}}_{1,1}(b_{2,1,1})+b_{1,1,2}\emph{\emph{D}}_{1,2}(b_{2,1,1})\right]\,u_{x^1}\nonumber\\
&+&\left[  b_{1,0,1}b_{2,1,2}+b_{1,1,2}b_{2,0,1}+b_{1,1,1}\emph{\emph{D}}_{1,1}(b_{2,1,2})+b_{1,1,2}\emph{\emph{D}}_{1,2}(b_{2,1,2})\right]\,u_{x^2}\nonumber\\
&+&b_{1,1,1}b_{2,1,1}\,u_{2x^1}+\left[ b_{1,1,2}b_{2,1,1}+b_{1,1,1}b_{2,1,2}  \right]\,u_{x^1x^2}+     b_{1,1,2}b_{2,1,2}  \,u_{2x^2}.
\end{eqnarray}
Identifying (\ref{eqaa11}) with (\ref{eqaa12}) leads to the following
\begin{proposition}
A necessary and sufficient condition for the differential operator $\mathcal{P}(2) $ defined by (\ref{eqaa9})  be decomposed into
the form (\ref{eqaa10}) is:
\begin{eqnarray}
g_{2,1} &=& b_{1,1,1}b_{2,1,1},
\label{eqaa13}\\
g_{2,2}+ g_{2,3}&=& b_{1,1,2}b_{2,1,1}+b_{1,1,1}b_{2,1,2},\label{eqaa14}\\
g_{2,4} &=& b_{1,1,2}b_{2,1,2},\label{eqaa15}\\
g_{1,1} &=&b_{1,0,1}b_{2,1,1}+b_{1,1,1}b_{2,0,1}+\mathcal{L}(b_{2,1,1}),\label{eqaa16}\\
g_{1,2} &=&b_{1,0,1}b_{2,1,2}+b_{1,1,2}b_{2,0,1}+\mathcal{L}(b_{2,1,2}),\label{eqaa17}\\
g_{0,1} &=&b_{1,0,1}b_{2,0,1}+\mathcal{L}(b_{2,0,2}),\label{eqaa18}
\end{eqnarray}
where $\mathcal{L}=b_{1,1,1}\emph{\emph{D}}_{1,1}+b_{1,1,2}\emph{\emph{D}}_{1,2}.$
\end{proposition}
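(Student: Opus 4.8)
The plan is to establish both implications at once, since the factorization (\ref{eqaa10}) is an equality of operators and therefore holds precisely when $\mathcal{P}(2)\,u$ and $\left[\mathcal{Q}_1(1)\mathcal{Q}_2(1)\right]u$ coincide for every test function $u\in\mathcal{C}^2(\Lambda_0,\mathbb{R})$. The whole argument rests on a single explicit expansion, after which one reads off the six relations by matching coefficients.

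First I would compute the composition $\mathcal{Q}_1(1)\mathcal{Q}_2(1)\,u$ directly. Writing the inner action as $\mathcal{Q}_2(1)\,u=b_{2,0,1}\,u+b_{2,1,1}\,u_{x^1}+b_{2,1,2}\,u_{x^2}$ and then applying $\mathcal{Q}_1(1)=b_{1,0,1}\mathrm{D}_{0,1}+b_{1,1,1}\mathrm{D}_{1,1}+b_{1,1,2}\mathrm{D}_{1,2}$, I would use the composition rule (ii) and decomposition rule (iii) to convert each product $\mathrm{D}_{1,h}\mathrm{D}_{1,h'}$ into the appropriate second-order operator $\mathrm{D}_{2,\cdot}$, while the Leibniz rule accounts for $\mathrm{D}_{1,1}$ and $\mathrm{D}_{1,2}$ falling on the coefficient functions $b_{2,k,h}$ of the inner factor. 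This is exactly the expansion displayed in (\ref{eqaa12}), in which $\mathcal{L}=b_{1,1,1}\mathrm{D}_{1,1}+b_{1,1,2}\mathrm{D}_{1,2}$ collects precisely the derivative-of-coefficient contributions; the hypothesis $b_{2,k,h}\in\mathcal{C}^1(\Lambda,\mathbb{R})$ is exactly what guarantees that the terms $\mathcal{L}(b_{2,k,h})$ are well-defined continuous functions.

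Next I would equate this expansion with the direct expansion (\ref{eqaa11}) of $\mathcal{P}(2)\,u$, grouping both sides according to the six derivative monomials $u,\,u_{x^1},\,u_{x^2},\,u_{2x^1},\,u_{x^1x^2},\,u_{2x^2}$. The decisive step---the one that upgrades ``sufficient'' to ``necessary and sufficient''---is the observation that these six derivative functionals are linearly independent over $\mathcal{C}(\Lambda_0,\mathbb{R})$: two expressions of the form $\sum_\alpha c_\alpha\,\partial^\alpha u$ agree for all $u\in\mathcal{C}^2(\Lambda_0,\mathbb{R})$ if and only if their coefficient functions agree pointwise, since at each point each $c_\alpha$ is recovered by testing against a suitable monomial in $x^1-x_0^1,\,x^2-x_0^2$. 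Matching coefficients then gives (\ref{eqaa13})--(\ref{eqaa15}) from the three second-order terms, (\ref{eqaa16})--(\ref{eqaa17}) from the two first-order terms, and the coefficient of $u$ yields the zeroth-order relation $g_{0,1}=b_{1,0,1}b_{2,0,1}+\mathcal{L}(b_{2,0,1})$ of (\ref{eqaa18}).

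I do not expect a genuine obstacle here; the care points are of a bookkeeping nature. Because mixed partials commute on $\mathcal{C}^2$, the two distinct second-order contributions $b_{1,1,1}b_{2,1,2}$ and $b_{1,1,2}b_{2,1,1}$ both land on the single monomial $u_{x^1x^2}$, so they can only be identified against the \emph{combined} coefficient $g_{2,2}+g_{2,3}$; this is why the left-hand side of (\ref{eqaa14}) appears as a sum and why $g_{2,2}$ and $g_{2,3}$ are not separately determined by the factorization. One must likewise apply the Leibniz rule scrupulously so that the $\mathcal{L}(b_{2,\cdot})$ terms emerge with the correct coefficients. Once these points are in place, the sufficiency direction is a direct substitution into (\ref{eqaa12}) and the necessity direction follows from the linear-independence argument, completing the equivalence.
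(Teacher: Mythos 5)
Your proposal is correct and follows essentially the same route as the paper: expand $\mathcal{Q}_1(1)\mathcal{Q}_2(1)\,u$ via the Leibniz and composition rules to obtain (\ref{eqaa12}), equate it with (\ref{eqaa11}), and match the coefficients of the six derivative monomials (your explicit linear-independence justification of the matching, and your reading of the $b_{2,0,2}$ in (\ref{eqaa18}) as $b_{2,0,1}$, merely make explicit what the paper leaves implicit).
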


Propose an approach to solve system (\ref{eqaa13})-(\ref{eqaa18}).
Assume that at least one of the functions $g_{2,1}$ and $g_{2,4}$ does not vanish on $\Lambda,$ says $g_{2,1}.$
It is always possible to find two nonzero functions on $\Lambda,$ namely $b_{1,1,1}$ and $b_{2,1,1}$ which satisfy
(\ref{eqaa13}). Substituting $X_1=b_{1,1,2}$ and $X_2=b_{2,1,2}$ into (\ref{eqaa14}) yields
\begin{equation}\label{eqaa19}
X_1=\frac{1}{b_{2,1,1}}\left(g_{2,2}+g_{2,3}-b_{1,1,1}X_2   \right).
\end{equation}
The substitution of (\ref{eqaa19}) into (\ref{eqaa15}) shows that $b_{2,1,2}$ is a solution
of the second degree algebraic equation
\begin{equation}\label{eqaa20}
\frac{b_{1,1,1}}{g_{2,1}}X^2_2-\frac{g_{2,2}+ g_{2,3}}{g_{2,1}}X_2+\frac{g_{2,4}}{b_{1,1,1}}=0.
\end{equation}
The discriminant of equation (\ref{eqaa20}) is
\begin{equation}\label{eqaa21}
\Delta=\left(g_{2,2}+ g_{2,3}\right)^2-4g_{2,1}g_{2,4}=\left(b_{1,1,2}b_{2,1,1}-b_{1,1,1}b_{2,1,2}\right)^2  \geq 0.
\end{equation}
If $\Delta >0,$ then the substitution of $Y=b_{1,0,1}$ and $Z=b_{2,0,1}$ into (\ref{eqaa16}) and (\ref{eqaa17})
implies that the decomposition (\ref{eqaa10})  is possible if
  the unique solution to the following algebraic system  in $Y$ and $Z$
\begin{eqnarray}\label{eqaa22}
g_{1,1}-\mathcal{L}(b_{2,1,1})&=& b_{2,1,1}Y+b_{1,1,1}Z\nonumber\\
g_{1,2}-\mathcal{L}(b_{2,1,2})&=& b_{2,1,2}Y+b_{1,1,2}Z
\end{eqnarray}
satisfies (\ref{eqaa18}). Indeed, the determinant of the system (\ref{eqaa22}) is
$$b_{1,1,2}b_{2,1,1}-b_{1,1,1}b_{2,1,2}=\pm\sqrt{\Delta}\neq 0.$$
If $\Delta = 0,$  then the substitution of $Y=b_{1,0,1}$ and $Z=b_{2,0,1}$ into (\ref{eqaa16}) yields
\begin{equation}\label{eqaa23}
Y=\frac{1}{b_{2,1,1}}\left[ g_{1,1}-\mathcal{L}(b_{2,1,1})-b_{1,1,1}Z    \right].
\end{equation}
Then, the substitution of (\ref{eqaa23}) into (\ref{eqaa18}) implies that the decomposition (\ref{eqaa10})  is strongly related
to the existence of a solution to the following  first order quasi-linear partial differential equation in $Z$
\begin{equation}\label{eqaa24}
\mathcal{L}(Z)-\frac{b_{1,1,1}}{b_{2,1,1}}Z^2+\frac{g_{1,1}-\mathcal{L}(b_{2,1,1})}{b_{2,1,1}}Z-g_{0,1}=0
\end{equation}
which satisfies (\ref{eqaa17}).

\subsection{Factorizations of systems of linear  differential equations}
The previous analysis is now made for systems of linear differential equations.
\subsubsection{General considerations}

Let $\Lambda$ be an open subset of $\mathbb{R}^n.$
 Examine now the factorization process for systems of $s$-th order, $(s\geq 2),$ linear
 differential equations with $n$ independent variables $x=\left(x^1, \cdots ,x^n\right)$
 and $m\geq 2$ dependent variables $u=\,^{t}\left(u^1, \cdots ,u^m\right),$ $u=u(x)$
 whose associated matrix operator, $\mathcal{M}(s),$ is of the form
 \begin{equation}\label{eqb1}
\mathcal{M}(s)=\left[  \mathcal{R}_{p,q}\left(s_{p,q}\right)   \right]_{1\leq p,q\leq m};
 \end{equation}
the  $\mathcal{R}_{p,q}\left(s_{p,q}\right)$ are $s_{p,q}$-th order linear differential operators
 \begin{equation}\label{eqb2}
\mathcal{R}_{p,q}\left(s_{p,q}\right)=\sum_{k=0}^{s_{p,q}}\sum_{h=1}^{p_k}f_{p,q,k,h}(x)\emph{\emph{D}}_{k,h},
 \end{equation}
where $f_{p,q,k,h}\in \mathcal{C}(\Lambda,\mathbb{R}),$ $s_{p,q}=s-1+\delta_{p,q},$ $\delta_{p,p}=1$ and $\delta_{p,q}= 0$ if $p\neq q.$  \\
Let $\Lambda$ and $\Lambda_0$ be two open subsets of $\mathbb{R}^n$ such that $\Lambda_0 \subset \Lambda.$ The matrix operator $\mathcal{M}(s)$ acts on a vector valued function $u=\,^{t}\left(u^1, \cdots ,u^m\right) \in \mathcal{C}^s(\Lambda_0,\mathbb{R}^m)$ as follows
$$
\mathcal{M}(s) \,u=\left[  \mathcal{R}_{p,q}\left(s_{p,q}\right)   \right]_{1\leq p,q\leq m}\,u=\left[  \sum_{q=1}^{m} \mathcal{R}_{p,q}\left(s_{p,q}\right)  \,u^q \right]_{1\leq p\leq m}.
$$
The method of factorization consists in seeking a decomposition of the matrix   $\mathcal{M}(s)$ under the following form
 \begin{equation}\label{eqb3}
\mathcal{M}(s)=\prod_{i=1}^{l}\mathcal{N}_i(s_i)
 \end{equation}
where
\begin{equation}\label{eqb4}
\mathcal{N}_i(s_i)=\left[  \mathcal{T}_{i,p,q}\left(s_{i,p,q}\right)   \right]_{1\leq p,q\leq m}
 \end{equation}
  and
\begin{equation}\label{eqb5}
\mathcal{T}_{i,p,q}\left(s_{i,p,q}\right)=\sum_{k=0}^{s_{i,p,q}}\sum_{h=1}^{p_k}a_{i,p,q,k,h}(x)\emph{\emph{D}}_{k,h},
 \end{equation}
with $\sum_{i=1}^l s_i=s,$   $s_{i,p,q}=s_i-1+\delta_{p,q},\,$
$a_{1,p,q,k,h} \in \mathcal{C}(\Lambda, \mathbb{R})$
 and  $a_{i,p,q,k,h} \in \mathcal{C}^{\sum_{j=1}^{i-1}s_{i,p,q}}(\Lambda, \mathbb{R}),$
$i=2,3, \cdots  ,l.$
\begin{proposition}
Let $\mathcal{M}(s)$ be a matrix of differential operators defined by (\ref{eqb1})   which can be decomposed into the form
 (\ref{eqb3}). If the function $u_0=\,^{t}\left(u^1_0, \cdots ,u^m_0\right)$  satisfies
\begin{equation}\label{eqb6}
\mathcal{N}_l(s_l)u_0=0,
\end{equation}
and $u_j=\,^{t}\left(u^1_j, \cdots ,u^m_j\right),$ $j=1,2,\cdots  ,l-1$ are solutions of the system
\begin{eqnarray}
 \label{eqb7}
  \prod_{k=l-j+1}^{l}\mathcal{N}_{k}(s_k)u_j&=& v_{j},\;j=1,\,2,\ldots,\,l-1,
\end{eqnarray}
where $v_j=\,^{t}\left(v^1_j, \cdots ,v^m_j\right),$ $j=1,\,2,\ldots,\,l-1,$ are solutions of
\begin{eqnarray}
\label{eqb8}
\prod_{i=1}^{l-j}\mathcal{N}_i(s_i)v_j&=&0,
\end{eqnarray}
then $u_0,$ $u_1,\,\ldots,\,u_{l-1}$ are $l$ particular solutions of the equation
$\mathcal{M}(s)\,u=0.$
\end{proposition}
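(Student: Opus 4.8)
The plan is to treat this exactly as the matrix-valued analogue of Proposition~\ref{pn1}: each candidate $u_0,u_1,\ldots,u_{l-1}$ is substituted into $\mathcal{M}(s)$, the factorization~(\ref{eqb3}) is invoked, and the factors are peeled off in order using the defining hypotheses. The only structural point that must be checked before any computation is that the ordered product $\prod_{i=1}^{l}\mathcal{N}_i(s_i)$ is \emph{associative}, so that it may be regrouped at any intermediate index; this follows from the associativity of composition of differential operators together with the associativity of matrix multiplication, and it is what legitimizes splitting the product. Non-commutativity of the $\mathcal{N}_i$ is not an issue, since the hypotheses~(\ref{eqb6})--(\ref{eqb8}) are all stated with respect to a single fixed left-to-right ordering of the factors.

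First I would dispose of $u_0$. Using~(\ref{eqb3}) and associativity, write $\mathcal{M}(s)u_0=\bigl(\prod_{i=1}^{l-1}\mathcal{N}_i(s_i)\bigr)\,\mathcal{N}_l(s_l)u_0$; since $\mathcal{N}_l(s_l)u_0=0$ by~(\ref{eqb6}), linearity of the remaining operators gives $\mathcal{M}(s)u_0=0$. This is the base case and requires nothing beyond peeling off the rightmost factor.

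Next, for each $j=1,\ldots,l-1$, I would split the product at the index $l-j$, writing
\begin{equation*}
\mathcal{M}(s)u_j=\Bigl(\prod_{i=1}^{l-j}\mathcal{N}_i(s_i)\Bigr)\Bigl(\prod_{k=l-j+1}^{l}\mathcal{N}_k(s_k)\Bigr)u_j.
\end{equation*}
By hypothesis~(\ref{eqb7}), the inner (rightmost) block sends $u_j$ to $v_j$, so the expression collapses to $\bigl(\prod_{i=1}^{l-j}\mathcal{N}_i(s_i)\bigr)v_j$; applying~(\ref{eqb8}) then yields $\mathcal{M}(s)u_j=0$. Because $\mathcal{M}(s)$, the $\mathcal{N}_i$, and the $v_j$ are all vector/matrix valued, I would note that each equality is read componentwise through the action $\mathcal{N}_i(s_i)u=\bigl[\sum_{q=1}^m \mathcal{T}_{i,p,q}(s_{i,p,q})u^q\bigr]_{1\le p\le m}$, but the manipulation is otherwise identical to the scalar case.

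The main obstacle, such as it is, is not a genuine difficulty but a bookkeeping check: one must be certain that the regularity assumptions on the coefficients $a_{i,p,q,k,h}$ (the differentiability orders $\mathcal{C}^{\sum_{j=1}^{i-1}s_{i,p,q}}$ specified after~(\ref{eqb5})) guarantee that each intermediate function, in particular $v_j$ and $\bigl(\prod_{k=l-j+1}^{l}\mathcal{N}_k(s_k)\bigr)u_j$, is smooth enough for the next operator block to act on it, so that all the displayed compositions are well defined on $\Lambda_0$. Once that is confirmed, the proof is a direct transcription of the argument for Proposition~\ref{pn1}.
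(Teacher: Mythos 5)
Your proposal is correct and follows exactly the route the paper takes: the paper's proof of this proposition is literally ``similar to that of Proposition~\ref{pn1},'' namely peeling off the rightmost factor for $u_0$ and splitting the ordered product at index $l-j$ for each $u_j$, then invoking (\ref{eqb7}) and (\ref{eqb8}). Your added remarks on associativity and on the regularity of the intermediate functions are sensible bookkeeping but do not change the argument.
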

\begin{proof}
The proof is similar to that of the \textrm{Proposition} \ref{pn1}.
\end{proof}

Expanding (\ref{eqb3}) leads to the relations between the unknown functions   $a_{i,p,q,k,h}$ of $\mathcal{N}_i(s_i)$
and the known functions $f_{p,q,k,h}$ of $\mathcal{M}(s).$

As matter of clarity, in the sequel we explicitly derive necessary and sufficient conditions for the factorization of systems of second order linear ordinary
and partial differential operators with two  independent variables.

\subsubsection{Necessary and sufficient conditions for the factorization of systems of second order linear ODEs}

Let $\Lambda$ and $\Lambda_0$ be two open subsets of $\mathbb{R}$ such that $\Lambda_0 \subset \Lambda.$
Consider the matrix operator
\begin{equation}\label{eqbb1}
\mathcal{M}(2)=\left[  \mathcal{R}_{p,q}  \right]_{1\leq p,q\leq m},
 \end{equation}
where
\begin{equation}\label{eqbb2}
\mathcal{R}_{p,p}=\sum_{k=0}^{2}\sum_{h=1}^{p_k}f_{p,q,k,h}(x)\emph{\emph{D}}_{k,h} =f_{p,p,0,1}\emph{\emph{D}}_{0,1}+f_{p,p,1,1}\emph{\emph{D}}_{1,1}+f_{p,p,2,1}\emph{\emph{D}}_{2,1}
 \end{equation}
and for $p \neq q$
\begin{equation}\label{eqbb3}
\mathcal{R}_{p,q}=\sum_{k=0}^{1}\sum_{h=1}^{p_k}f_{p,q,k,h}(x)\emph{\emph{D}}_{k,h} =f_{p,q,0,1}\emph{\emph{D}}_{0,1}+f_{p,q,1,1}\emph{\emph{D}}_{1,1}
 \end{equation}
with $f_{p,q,k,h}\in \mathcal{C}(\Lambda,\mathbb{R}),$ $x=x^1.$ Write $\mathcal{M}(2)$ in the form
\begin{equation}\label{eqbb4}
\mathcal{M}(2)=\mathcal{N}_1(1)\cdot \mathcal{N}_2(1),
 \end{equation}
where
\begin{equation}\label{eqbb5}
\mathcal{N}_i(1)=\left[  \mathcal{T}_{i,p,q}  \right]_{1\leq p,q\leq m}
 \end{equation}
with
\begin{equation}\label{eqbb6}
\mathcal{T}_{i,p,p}=\sum_{k=0}^{1}\sum_{h=1}^{p_k}a_{i,p,p,k,h}(x)\emph{\emph{D}}_{k,h} =a_{i,p,p,0,1}\emph{\emph{D}}_{0,1}+a_{i,p,p,1,1}\emph{\emph{D}}_{1,1}
 \end{equation}
and for $p \neq q$
\begin{equation}\label{eqbb7}
\mathcal{T}_{i,p,q} =a_{i,p,q,0,1}\emph{\emph{D}}_{0,1},
 \end{equation}
$a_{1,p,q,k,h}\in \mathcal{C}(\Lambda,\mathbb{R})$ and $a_{2,p,q,k,h}\in \mathcal{C}^1(\Lambda,\mathbb{R}).$
Let $u =\,^{t}\left(u^1, \cdots ,u^m\right) \in \mathcal{C}^2(\Lambda_0,\mathbb{R}).$ Then we have
\begin{equation}\label{eqbb8}
\mathcal{M}(2)\,u = \left[  \mathcal{R}_{p,q}  \right]_{1\leq p,q\leq m}\,u=\left[ \sum_{q=1}^m \mathcal{R}_{p,q}\,u^q  \right]_{1\leq p\leq m}
\end{equation}
where
$$  \mathcal{R}_{p,p}\,u^p= f_{p,p,0,1}\,u^p+f_{p,p,1,1}\,u^p_x+f_{p,p,2,1}\,u^p_{2x} $$
and for $p \neq q$
$$ \mathcal{R}_{p,q}\,u^q =f_{p,q,0,1}\,u^q+f_{p,q,1,1}\,u^q_x.  $$
On the other hand, after expansion of (\ref{eqbb4}), we have
\begin{equation}\label{eqbb9}
\mathcal{M}(2)\,u = \left[  \widetilde{\mathcal{R}}_{p,q}  \right]_{1\leq p,q\leq m}\,u=\left[ \sum_{q=1}^m \widetilde{\mathcal{R}}_{p,q}\,u^q  \right]_{1\leq p\leq m},
\end{equation}
where
\begin{eqnarray}
\widetilde{\mathcal{R}}_{p,p}\,u^p&=&\left[\sum_{l=1}^m a_{1,p,l,0,1}a_{2,l,p,0,1}+a_{1,p,p,1,1}\emph{\emph{D}}_{1,1}(a_{2,p,p,0,1}) \right]u^p+a_{1,p,p,1,1}a_{2,p,p,1,1}\,u^p_{2x}\nonumber\\
&+& \left[a_{1,p,p,0,1}a_{2,p,p,1,1}+a_{1,p,p,1,1}a_{2,p,p,0,1}+a_{1,p,p,1,1}\emph{\emph{D}}_{1,1}(a_{2,p,p,1,1}) \right]u^p_x\nonumber
\end{eqnarray}
and for $p \neq q$
\begin{eqnarray}
\widetilde{\mathcal{R}}_{p,q}\,u^q&=&\left[a_{1,p,p,1,1}a_{2,p,q,0,1}+a_{1,p,q,0,1}a_{2,q,q,1,1} \right]u^q_x\nonumber\\
&+&\left[\sum_{l=1}^m a_{1,p,l,0,1}a_{2,l,q,0,1}+a_{1,p,p,1,1}\emph{\emph{D}}_{1,1}(a_{2,p,q,0,1}) \right]u^q.\nonumber
\end{eqnarray}
Identifying (\ref{eqbb8}) with (\ref{eqbb9}) yields
\begin{proposition}
A necessary and sufficient condition for the differential operator $\mathcal{M}(2) $ defined by (\ref{eqbb1})  be decomposed into
the form (\ref{eqbb4}) is:
\begin{eqnarray}
f_{p,p,0,1}&=&\sum_{l=1}^m a_{1,p,l,0,1}a_{2,l,p,0,1}+a_{1,p,p,1,1}\emph{\emph{D}}_{1,1}(a_{2,p,p,0,1}),\\
f_{p,p,1,1}&=& a_{1,p,p,0,1}a_{2,p,p,1,1}+a_{1,p,p,1,1}a_{2,p,p,0,1}+a_{1,p,p,1,1}\emph{\emph{D}}_{1,1}(a_{2,p,p,1,1}),\\
f_{p,p,2,1}    &=&a_{1,p,p,1,1}a_{2,p,p,1,1}
\end{eqnarray}
and for $p \neq q$
\begin{eqnarray}
f_{p,q,0,1}&=&\sum_{l=1}^m a_{1,p,l,0,1}a_{2,l,q,0,1}+a_{1,p,p,1,1}\emph{\emph{D}}_{1,1}(a_{2,p,q,0,1}),\\
f_{p,q,1,1}   &=& a_{1,p,p,1,1}a_{2,p,q,0,1}+a_{1,p,q,0,1}a_{2,q,q,1,1}.
\end{eqnarray}
\end{proposition}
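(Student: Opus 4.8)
The plan is to reduce the statement to a direct coefficient comparison between the two expansions of $\mathcal{M}(2)u$ already set up in the excerpt, namely (\ref{eqbb8}) obtained from the definition (\ref{eqbb1})--(\ref{eqbb3}) and (\ref{eqbb9}) obtained by expanding the product (\ref{eqbb4}). The guiding principle is that two linear differential operators acting on an \emph{arbitrary} $u=\,^{t}(u^1,\dots,u^m)\in\mathcal{C}^2(\Lambda_0,\mathbb{R}^m)$ coincide if and only if all of their coefficient functions coincide; hence a single coefficient-by-coefficient identification delivers necessity and sufficiency simultaneously. So the real content is to carry out the expansion of $\mathcal{N}_1(1)\cdot\mathcal{N}_2(1)$ honestly and read off the five relations.

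First I would invoke the matrix-multiplication rule to write the $(p,q)$ entry of the product as $\widetilde{\mathcal{R}}_{p,q}=\sum_{l=1}^m \mathcal{T}_{1,p,l}\circ\mathcal{T}_{2,l,q}$, where each composition of scalar operators is resolved using properties (i)--(iii) of the $\mathrm{D}_{k,h}$ together with the action of $\mathrm{D}_{1,1}$ on products (the Leibniz rule), exactly as in the scalar computation (\ref{eqa12}). I would then split into the diagonal and off-diagonal cases, exploiting that by (\ref{eqbb6})--(\ref{eqbb7}) the diagonal operators $\mathcal{T}_{i,p,p}$ are first order while the off-diagonal $\mathcal{T}_{i,p,q}$ ($p\neq q$) have order zero. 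For $\widetilde{\mathcal{R}}_{p,p}$, only the index $l=p$ yields a second-order term, and its expansion reproduces the scalar result (\ref{eqa12}) verbatim, while every $l\neq p$ is a product of two order-zero operators contributing solely to the coefficient of $u^p$; collecting these recovers the displayed formula for $\widetilde{\mathcal{R}}_{p,p}u^p$, after which the three diagonal relations follow by matching the coefficients of $u^p$, $u^p_x$, $u^p_{2x}$.

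For the off-diagonal entry $\widetilde{\mathcal{R}}_{p,q}$ I would distinguish three regimes of the summation index: $l=p$ (a first-order factor composed with an order-zero factor, producing a $u^q_x$ term \emph{and}, through the Leibniz rule, the derivative term $a_{1,p,p,1,1}\mathrm{D}_{1,1}(a_{2,p,q,0,1})$), $l=q$ (an order-zero factor composed with a first-order factor, producing a $u^q_x$ term carrying no coefficient derivative), and $l\neq p,q$ (both factors of order zero, contributing only to $u^q$). Summing the order-zero pieces over all $l$ then condenses into the compact term $\sum_{l=1}^m a_{1,p,l,0,1}a_{2,l,q,0,1}$, matching the displayed $\widetilde{\mathcal{R}}_{p,q}u^q$ and giving the two remaining relations. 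The main obstacle is precisely this off-diagonal bookkeeping: keeping track of which of the three index regimes feeds each derivative order, and attributing the single surviving derivative term $\mathrm{D}_{1,1}(a_{2,p,q,0,1})$ to the correct ($l=p$) summand. The noncommutativity between $\mathrm{D}_{1,1}$ and multiplication by the coefficient functions of the second factor generates every subtle term, so getting its placement right is what the whole identification turns on.
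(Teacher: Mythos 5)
Your proposal is correct and follows essentially the same route as the paper, which obtains the proposition precisely by expanding $\mathcal{N}_1(1)\cdot\mathcal{N}_2(1)$ entrywise via $\widetilde{\mathcal{R}}_{p,q}=\sum_{l=1}^m\mathcal{T}_{1,p,l}\circ\mathcal{T}_{2,l,q}$ and identifying the coefficients of $u^q$, $u^q_x$, $u^p_{2x}$ in (\ref{eqbb8}) and (\ref{eqbb9}). Your bookkeeping of the $l=p$, $l=q$, and $l\neq p,q$ regimes, including the placement of the Leibniz term $a_{1,p,p,1,1}\emph{D}_{1,1}(a_{2,p,q,0,1})$ in the $l=p$ summand, reproduces exactly the displayed expressions for $\widetilde{\mathcal{R}}_{p,p}u^p$ and $\widetilde{\mathcal{R}}_{p,q}u^q$.
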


\subsubsection{Necessary and sufficient conditions for the factorization of systems of second order linear PDEs with two independent variables}

Let $\Lambda$ and $\Lambda_0$ be two open subsets of $\mathbb{R}^2$ such that $\Lambda_0 \subset \Lambda.$
Consider the matrix operator
\begin{equation}\label{eqbbb1}
\mathcal{M}(2)=\left[  \mathcal{R}_{p,q}  \right]_{1\leq p,q\leq m},
 \end{equation}
where
\begin{eqnarray}\label{eqbbb2}
\mathcal{R}_{p,p}&=&\sum_{k=0}^{2}\sum_{h=1}^{p_k}f_{p,q,k,h}(x)\emph{\emph{D}}_{k,h} \nonumber\\ &=&f_{p,p,0,1}\emph{\emph{D}}_{0,1}+f_{p,p,1,1}\emph{\emph{D}}_{1,1}+f_{p,p,1,2}\emph{\emph{D}}_{1,2}\nonumber\\
&+&f_{p,p,2,1}\emph{\emph{D}}_{2,1}
+f_{p,p,2,2}\emph{\emph{D}}_{2,2}+f_{p,p,2,3}\emph{\emph{D}}_{2,3}+f_{p,p,2,4}\emph{\emph{D}}_{2,4}
 \end{eqnarray}
and for $p \neq q$
\begin{equation}\label{eqbbb3}
\mathcal{R}_{p,q}=\sum_{k=0}^{1}\sum_{h=1}^{p_k}f_{p,q,k,h}(x)\emph{\emph{D}}_{k,h} =f_{p,q,0,1}\emph{\emph{D}}_{0,1}+f_{p,q,1,1}\emph{\emph{D}}_{1,1}+f_{p,q,1,2}\emph{\emph{D}}_{1,2}
 \end{equation}
with $f_{p,q,k,h}\in \mathcal{C}(\Lambda,\mathbb{R}),$ $x= \left(x^1,x^2\right).$ Write $\mathcal{M}(2)$ in the form
\begin{equation}\label{eqbbb4}
\mathcal{M}(2)=\mathcal{N}_1(1)\cdot \mathcal{N}_2(1),
 \end{equation}
where
\begin{equation}\label{eqbbb5}
\mathcal{N}_i(1)=\left[  \mathcal{T}_{i,p,q}  \right]_{1\leq p,q\leq m}
 \end{equation}
with
\begin{equation}\label{eqbbb6}
\mathcal{T}_{i,p,p}=\sum_{k=0}^{1}\sum_{h=1}^{p_k}a_{i,p,p,k,h}(x)\emph{\emph{D}}_{k,h} =a_{i,p,p,0,1}\emph{\emph{D}}_{0,1}+a_{i,p,p,1,1}\emph{\emph{D}}_{1,1}+a_{i,p,p,1,2}\emph{\emph{D}}_{1,2}
 \end{equation}
and for $p \neq q$
\begin{equation}\label{eqbbb7}
\mathcal{T}_{i,p,q} =a_{i,p,q,0,1}\emph{\emph{D}}_{0,1},
 \end{equation}
$a_{1,p,q,k,h}\in \mathcal{C}(\Lambda,\mathbb{R})$ and $a_{2,p,q,k,h}\in \mathcal{C}^1(\Lambda,\mathbb{R}).$
Let $u =\,^{t}\left(u^1, \cdots ,u^m\right) \in \mathcal{C}^2(\Lambda_0,\mathbb{R}).$ Then we have
\begin{equation}\label{eqbbb8}
\mathcal{M}(2)\,u = \left[  \mathcal{R}_{p,q}  \right]_{1\leq p,q\leq m}\,u=\left[ \sum_{q=1}^m \mathcal{R}_{p,q}\,u^q  \right]_{1\leq p\leq m},
\end{equation}
where
\begin{eqnarray}
  \mathcal{R}_{p,p}\,u^p&=& f_{p,p,0,1}\,u^p+f_{p,p,1,1}\,u^p_{x^1}+f_{p,p,1,2}\,u^p_{x^2}\nonumber\\
&+&f_{p,p,2,1}\,u^p_{2x^1}
+\left(f_{p,p,2,2}+f_{p,p,2,3}\right)\,u^p_{x^1x^2}+f_{p,p,2,4}\,u^p_{2x^2} \nonumber
\end{eqnarray}
and for $p \neq q$
$$ \mathcal{R}_{p,q}\,u^q =f_{p,q,0,1}\,u^q+f_{p,q,1,1}\,u^q_{x^1}+f_{p,q,1,2}\,u^q_{x^2}.  $$
On the other hand, after expansion of (\ref{eqbbb4}), we have
\begin{equation}\label{eqbbb9}
\mathcal{M}(2)\,u = \left[  \widetilde{\mathcal{R}}_{p,q}  \right]_{1\leq p,q\leq m}\,u=\left[ \sum_{q=1}^m \widetilde{\mathcal{R}}_{p,q}\,u^q  \right]_{1\leq p\leq m},
\end{equation}
where
\begin{eqnarray}
\widetilde{\mathcal{R}}_{p,p}\,u^p&=&a_{1,p,p,1,1}a_{2,p,p,1,1}\,u^p_{2x^1}+\left(a_{1,p,p,1,2}a_{2,p,p,1,1}+a_{1,p,p,1,1}a_{2,p,p,1,2}\right)\,u^p_{x^1x^2}
\nonumber\\
&+& \left[a_{1,p,p,0,1}a_{2,p,p,1,1}+a_{1,p,p,1,1}a_{2,p,p,0,1}+\mathcal{L}_p(a_{2,p,p,1,1}) \right]u^p_{x^1}\nonumber\\
&+&\left[a_{1,p,p,0,1}a_{2,p,p,1,2}+a_{1,p,p,1,2}a_{2,p,p,0,1}+\mathcal{L}_p(a_{2,p,p,1,2}) \right]u^p_{x^2}\nonumber\\
&+&\left[\sum_{l=1}^m a_{1,p,l,0,1}a_{2,l,p,0,1}+\mathcal{L}_p(a_{2,p,p,0,1}) \right]u^p+a_{1,p,p,1,2}a_{2,p,p,1,2}\,u^p_{2x^2}\nonumber
\end{eqnarray}
and for $p \neq q$
\begin{eqnarray}
\widetilde{\mathcal{R}}_{p,q}\,u^q&=&\left[a_{1,p,p,1,1}a_{2,p,q,0,1}+a_{1,p,q,0,1}a_{2,q,q,1,1} \right]u^q_{x^1}\nonumber\\
&+&\left[a_{1,p,p,1,2}a_{2,p,q,0,1}+a_{1,p,q,0,1}a_{2,q,q,1,2} \right]u^q_{x^2}\nonumber\\
&+&\left[\sum_{l=1}^m a_{1,p,l,0,1}a_{2,l,q,0,1}+\mathcal{L}_p(a_{2,p,q,0,1}) \right]u^q,\nonumber
\end{eqnarray}
where $\mathcal{L}_p=a_{1,p,p,1,1}\emph{\emph{D}}_{1,1}+a_{1,p,p,1,2}\emph{\emph{D}}_{1,2}.$
From the Identification of (\ref{eqbbb8}) with (\ref{eqbbb9}) results
\begin{proposition}
A necessary and sufficient condition for the differential operator $\mathcal{M}(2) $ defined by (\ref{eqbbb1})  be decomposed into
the form (\ref{eqbbb4}) is:
\begin{eqnarray}
f_{p,p,0,1}&=&\sum_{l=1}^m a_{1,p,l,0,1}a_{2,l,p,0,1}+\mathcal{L}_p(a_{2,p,p,0,1}),\\
f_{p,p,1,1}&=& a_{1,p,p,0,1}a_{2,p,p,1,1}+a_{1,p,p,1,1}a_{2,p,p,0,1}+\mathcal{L}_p(a_{2,p,p,1,1}),\\
f_{p,p,1,2}&=& a_{1,p,p,0,1}a_{2,p,p,1,2}+a_{1,p,p,1,2}a_{2,p,p,0,1}+\mathcal{L}_p(a_{2,p,p,1,2}),\\
f_{p,p,2,1}    &=&a_{1,p,p,1,1}a_{2,p,p,1,1},\\
f_{p,p,2,2}+f_{p,p,2,3}    &=&a_{1,p,p,1,2}a_{2,p,p,1,1}+a_{1,p,p,1,1}a_{2,p,p,1,2},\\
f_{p,p,2,4}    &=&a_{1,p,p,1,2}a_{2,p,p,1,2}
\end{eqnarray}
and for $p \neq q$
\begin{eqnarray}
f_{p,q,0,1}&=&\sum_{l=1}^m a_{1,p,l,0,1}a_{2,l,q,0,1}+\mathcal{L}_p(a_{2,p,q,0,1}),\\
f_{p,q,1,1}   &=& a_{1,p,p,1,1}a_{2,p,q,0,1}+a_{1,p,q,0,1}a_{2,q,q,1,1},\\
f_{p,q,1,2}   &=& a_{1,p,p,1,2}a_{2,p,q,0,1}+a_{1,p,q,0,1}a_{2,q,q,1,2}.
\end{eqnarray}
\end{proposition}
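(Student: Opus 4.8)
The plan is to reduce the operator identity (\ref{eqbbb4}) to a coefficient-matching problem, exactly as was done in the scalar and ODE-system cases treated above. First I would observe that two matrix differential operators coincide if and only if they produce the same output on every sufficiently smooth test vector $u={}^{t}\left(u^1,\ldots,u^m\right)$; since at any point of $\Lambda_0$ the jet coordinates $u^q$, $u^q_{x^i}$ and $u^q_{x^ix^j}$ may be prescribed independently, equality of the two operators is equivalent to equality of the coefficient functions standing in front of each of these independent derivatives. Thus the proposition will follow once the expansion $\mathcal{N}_1(1)\cdot\mathcal{N}_2(1)\,u$ has been computed and its coefficients read off, which is precisely the passage from (\ref{eqbbb8}) to (\ref{eqbbb9}).

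The central computation is the matrix product, whose $(p,q)$ entry is $\widetilde{\mathcal{R}}_{p,q}=\sum_{l=1}^m\mathcal{T}_{1,p,l}\,\mathcal{T}_{2,l,q}$. I would organize this by separating the diagonal blocks from the off-diagonal ones, because the orders behave differently. On the diagonal ($q=p$) only the term $l=p$ is a product of two genuine first-order operators and hence the sole source of the second-order contributions $u^p_{2x^1}$, $u^p_{x^1x^2}$, $u^p_{2x^2}$; every term with $l\neq p$ is a product of two zeroth-order multiplication operators $a_{1,p,l,0,1}\mathrm{D}_{0,1}$ and $a_{2,l,p,0,1}\mathrm{D}_{0,1}$ and so feeds only the $u^p$ coefficient, which explains the sum $\sum_{l=1}^m a_{1,p,l,0,1}a_{2,l,p,0,1}$. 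Because the mixed derivative $u^p_{x^1x^2}$ is picked out simultaneously by $\mathrm{D}_{2,2}$ and $\mathrm{D}_{2,3}$, its coefficient is naturally the sum $f_{p,p,2,2}+f_{p,p,2,3}$. Off the diagonal ($q\neq p$) the only first-order contributions come from $l=p$ (first-order diagonal operator of $\mathcal{N}_1$ times the zeroth-order off-diagonal operator of $\mathcal{N}_2$) and from $l=q$ (zeroth-order off-diagonal operator of $\mathcal{N}_1$ times the first-order diagonal operator of $\mathcal{N}_2$), while all $l$ contribute to the $u^q$ coefficient.

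The step requiring care, and the one I expect to be the main obstacle, is the correct application of the Leibniz rule when the first-order part of a diagonal operator in $\mathcal{N}_1$ acts on the product of a coefficient of $\mathcal{N}_2$ with a further $\mathrm{D}_{k,h}$. Using the composition rule (ii) and the decomposition rule (iii) for the $\mathrm{D}_{k,h}$, one has $\mathrm{D}_{1,i}\bigl(a\,\mathrm{D}_{k,h}u\bigr)=\mathrm{D}_{1,i}(a)\,\mathrm{D}_{k,h}u+a\,\mathrm{D}_{1,i}\mathrm{D}_{k,h}u$; packaging the two first-order derivations of the first factor into the operator $\mathcal{L}_p=a_{1,p,p,1,1}\mathrm{D}_{1,1}+a_{1,p,p,1,2}\mathrm{D}_{1,2}$ is exactly what produces the terms $\mathcal{L}_p(a_{2,p,p,0,1})$, $\mathcal{L}_p(a_{2,p,p,1,1})$, $\mathcal{L}_p(a_{2,p,p,1,2})$ and $\mathcal{L}_p(a_{2,p,q,0,1})$ appearing in the stated conditions. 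The genuinely second-order terms, by contrast, arise from the purely multiplicative composition of the two leading symbols and carry no derivative of the coefficients, so the bookkeeping of which terms are differentiated and which are merely multiplied is the crux of the argument.

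Finally, I would collect in $\widetilde{\mathcal{R}}_{p,q}$ the coefficients of $u^q$, $u^q_{x^1}$, $u^q_{x^2}$ (and, on the diagonal, of $u^p_{2x^1}$, $u^p_{x^1x^2}$, $u^p_{2x^2}$) and equate them with the corresponding coefficients of $\mathcal{R}_{p,q}$ in (\ref{eqbbb2})--(\ref{eqbbb3}). By the independence argument of the first paragraph this identification is at once necessary and sufficient, and it yields exactly the displayed system of six diagonal relations and three off-diagonal relations, which completes the proof.
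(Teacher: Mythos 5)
Your proposal is correct and follows essentially the same route as the paper: expand the matrix product entrywise as $\widetilde{\mathcal{R}}_{p,q}=\sum_{l=1}^m\mathcal{T}_{1,p,l}\,\mathcal{T}_{2,l,q}$, apply the Leibniz rule to package the first-order derivations of the coefficients into $\mathcal{L}_p$, and identify coefficients of the independent jet variables with those of $\mathcal{R}_{p,q}$ in (\ref{eqbbb8}). The only addition is your explicit justification that coefficient identification is both necessary and sufficient (independence of the jet coordinates), which the paper leaves implicit.
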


\section{Nonlinear  differential operators}
In this section, we investigate  the factorization of nonlinear differential operators.

\subsection{Factorizations of nonlinear  differential equations}
We start with general considerations and then deduce the main results on conditions of factorization.
\subsubsection{General setting and results}

Let $s\geq 2$ be a positive integer, $\Lambda$ be an open subset of $\mathbb{R}^n$
and $\Omega$ an open subset of $\mathbb{R}.$
Let
\begin{equation}\label{nleqa2}
\mathcal{P}(s)=\sum_{k=0}^{s}\sum_{h=1}^{p_k}g_{k,h}(x,\cdot)\emph{\emph{D}}_{k,h}
\end{equation}
be a nonlinear differential operator of order $s,$ where $g_{k,h}\in \mathcal{C}(\Lambda\times\Omega,\mathbb{R}).$
The operator $\mathcal{P}(s)$ acts on a function $u\in \mathcal{C}^{s}(\Lambda,\Omega)$ as follows
\begin{equation}\label{nleqa3}
\mathcal{P}(s)\,u=\sum_{k=0}^{s}\sum_{h=1}^{p_k}g_{k,h}(x,u)\emph{\emph{D}}_{k,h}\,u.
\end{equation}
The method of factorization consists in seeking a decomposition of the differential operator (\ref{nleqa2})
in the following form
\begin{equation}\label{nleqa4}
\mathcal{P}(s)=\prod_{i=1}^{l}\mathcal{Q}_i(s_i)
\end{equation}
with $\sum_{i=1}^{l}s_i=s$ and
\begin{equation}\label{nleqa5}
\mathcal{Q}_i(s_i)=\sum_{k=0}^{s_i}\sum_{h=1}^{p_k}b_{i,k,h}(x,\cdot)\emph{\emph{D}}_{k,h},
\end{equation}
where $b_{1,k,h}\in \mathcal{C}(\Lambda\times\Omega,\mathbb{R})$  and  $b_{i,k,h}\in \mathcal{C}^{\sum_{j=1}^{i-1}s_j}(\Lambda\times\Omega,\mathbb{R}),$
$i=2,3, \cdots,l.$\\
Expanding (\ref{nleqa4}) leads to the relations between unknown
functions $b_{i,k,h}$ of the differential operators
$\mathcal{Q}_i(s_i)$ and the known functions $g_{k,h}$ of
the original differential operator $\mathcal{P}(s).$

\subsubsection{Necessary and sufficient conditions for the factorization of second order nonlinear ODEs}

Let $\Omega,\Lambda$ and $\Lambda_0$ be three open subsets of $\mathbb{R}$ such that $\Lambda_0 \subset  \Lambda.$
Consider the second order nonlinear ordinary differential operator
\begin{eqnarray}\label{nleqa9}
\mathcal{P}(2)&=&\sum_{k=0}^{2}\sum_{h=1}^{p_k}g_{k,h}(x,\cdot)\emph{\emph{D}}_{k,h}\nonumber\\
&=& g_{0,1}(x,\cdot)\emph{\emph{D}}_{0,1}+g_{1,1}(x,\cdot)\emph{\emph{D}}_{1,1}
+g_{2,1}(x,\cdot)\emph{\emph{D}}_{2,1},
\end{eqnarray}
where $g_{k,h}\in \mathcal{C}(\Lambda\times\Omega,\mathbb{R})$ and  $x=x^1.$
Write $\mathcal{P}(2)$ in the form
\begin{eqnarray}\label{nleqa10}
\mathcal{P}(2)&=&\mathcal{Q}_1(1)\cdot\mathcal{Q}_2(1)\nonumber\\
&=&\left[\sum_{k=0}^{1}\sum_{h=1}^{p_k}b_{1,k,h}(x,\cdot)\emph{\emph{D}}_{k,h}\right]
\left[\sum_{k=0}^{1}\sum_{h=1}^{p_k}b_{2,k,h}(x,\cdot)\emph{\emph{D}}_{k,h}\right]\nonumber\\
&=&\left[ b_{1,0,1}(x,\cdot)\emph{\emph{D}}_{0,1}+b_{1,1,1}(x,\cdot)\emph{\emph{D}}_{1,1}  \right]
\left[  b_{2,0,1}(x,\cdot)\emph{\emph{D}}_{0,1}+b_{2,1,1}(x,\cdot)\emph{\emph{D}}_{1,1} \right],
\end{eqnarray}
where $b_{1,k,h}\in \mathcal{C}(\Lambda\times\Omega,\mathbb{R})$  and  $b_{2,k,h}\in \mathcal{C}^{1}(\Lambda\times\Omega,\mathbb{R}).$
Let $u\in \mathcal{C}^2(\Lambda_0,\Omega).$ Then we have
\begin{eqnarray}\label{nleqa11}
\mathcal{P}(2)\,u
&=& g_{0,1}(x,u)\,u+g_{1,1}(x,u)\,u_x
+g_{2,1}(x,u)\,u_{2x}
\end{eqnarray}
and after expansion
\begin{eqnarray}\label{nleqa12}
\mathcal{P}(2)\,u&=&\left[ b_{1,0,1}(x,\cdot)\emph{\emph{D}}_{0,1}+b_{1,1,1}(x,\cdot)\emph{\emph{D}}_{1,1}  \right]
\left[  b_{2,0,1}(x,\cdot)\emph{\emph{D}}_{0,1}+b_{2,1,1}(x,\cdot)\emph{\emph{D}}_{1,1} \right]u\nonumber\\
&=&
\left[ b_{1,0,1}b_{2,1,1}+b_{1,1,1}b_{2,0,1}+b_{1,1,1}\emph{\emph{D}}_{1,1}(b_{2,1,1}) +b_{1,1,1}\emph{\emph{D}}_{1,2}(b_{2,0,1}) u \right]u_x\\
&+&b_{1,1,1}\emph{\emph{D}}_{1,2}(b_{2,1,1})u_x^2+\left[ b_{1,0,1}b_{2,0,1}+b_{1,1,1}\emph{\emph{D}}_{1,1}(b_{2,0,1})  \right]u+b_{1,1,1}b_{2,1,1}\,u_{2x}.\nonumber
\end{eqnarray}
Identifying (\ref{nleqa11}) with (\ref{nleqa12}) furnishes
\begin{proposition}
A necessary and  sufficient condition for the differential operator $\mathcal{P}(2) $ defined by (\ref{nleqa9})  be decomposed into
the form (\ref{nleqa10}) is:
\begin{eqnarray}
g_{2,1} &=& b_{1,1,1}b_{2,1,1},
\label{nleqa13}\\
g_{1,1} &=& b_{1,0,1}b_{2,1,1}+b_{1,1,1}b_{2,0,1}+b_{1,1,1}\emph{\emph{D}}_{1,1}(b_{2,1,1}) +b_{1,1,1}\emph{\emph{D}}_{1,2}(b_{2,0,1}) u ,\label{nleqa14}\\
0&=&b_{1,1,1}\emph{\emph{D}}_{1,2}(b_{2,1,1}),\label{nleqa15}\\
g_{0,1} &=& b_{1,0,1}b_{2,0,1}+b_{1,1,1}\emph{\emph{D}}_{1,1}(b_{2,0,1}).\label{nleqa16}
\end{eqnarray}
\end{proposition}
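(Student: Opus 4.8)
The plan is to establish both implications at once by recognizing that the factorization (\ref{nleqa10}) is an identity of operators, hence holds if and only if $\mathcal{P}(2)u$ and $\mathcal{Q}_1(1)\mathcal{Q}_2(1)u$ coincide for every admissible $u\in\mathcal{C}^2(\Lambda_0,\Omega)$. First I would compute the composite $\mathcal{Q}_1(1)\mathcal{Q}_2(1)u$ explicitly. Setting $\mathcal{Q}_2(1)u=b_{2,0,1}u+b_{2,1,1}u_x$ and applying $\mathcal{Q}_1(1)=b_{1,0,1}\emph{\emph{D}}_{0,1}+b_{1,1,1}\emph{\emph{D}}_{1,1}$, the only step that differs from the linear case is the action of $\emph{\emph{D}}_{1,1}$ on the coefficients $b_{2,0,1}(x,u)$ and $b_{2,1,1}(x,u)$: since these are evaluated along the solution curve $u=u(x)$, the total $x$-derivative must be expanded by the chain rule as $\emph{\emph{D}}_{1,1}(b_{2,k,1})+\emph{\emph{D}}_{1,2}(b_{2,k,1})\,u_x$, where $\emph{\emph{D}}_{1,1}$ and $\emph{\emph{D}}_{1,2}$ denote the partial derivatives with respect to $x$ and to $u$ respectively. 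Collecting terms reproduces the expansion (\ref{nleqa12}).

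The key structural observation is that both (\ref{nleqa11}) and (\ref{nleqa12}) are polynomials in the jet coordinates $u_x$ and $u_{2x}$ with coefficients depending on $(x,u)$: affine in $u_{2x}$ and quadratic in $u_x$, the genuinely new feature relative to the linear setting being the quadratic term $b_{1,1,1}\emph{\emph{D}}_{1,2}(b_{2,1,1})\,u_x^2$. I would then argue that $u,\,u_x,\,u_{2x}$ may be treated as algebraically independent, so that equality of the two operators is equivalent to equality of the coefficient of each monomial $u_{2x},\,u_x^2,\,u_x$ and the derivative-free part. This independence is justified by noting that at any $x_0\in\Lambda_0$ one can prescribe the triple $(u(x_0),u_x(x_0),u_{2x}(x_0))$ freely in $\Omega\times\mathbb{R}\times\mathbb{R}$ by a suitable truncated Taylor polynomial, so that agreement of the two actions for all $u$ forces agreement of the underlying functions on the jet space.

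Matching the four coefficients then yields the system directly: the coefficients of $u_{2x}$ give (\ref{nleqa13}); the absence of a $u_x^2$ term in (\ref{nleqa11}) forces its coefficient in (\ref{nleqa12}) to vanish, giving (\ref{nleqa15}); the coefficients of $u_x$ give (\ref{nleqa14}); and the derivative-free parts, both carrying a common factor $u$ produced by $\emph{\emph{D}}_{0,1}$, give $g_{0,1}(x,u)\,u=[b_{1,0,1}b_{2,0,1}+b_{1,1,1}\emph{\emph{D}}_{1,1}(b_{2,0,1})]\,u$, whence (\ref{nleqa16}) after cancelling $u$ (legitimate for $u\neq0$ and extended by continuity). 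For the converse it suffices to reverse the reasoning: substituting (\ref{nleqa13})--(\ref{nleqa16}) back into (\ref{nleqa12}) collapses it term by term to (\ref{nleqa11}), so the conditions are sufficient as well as necessary.

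I expect the main obstacle to be the careful bookkeeping of the expansion in the first step, in particular tracking the two chain-rule contributions $\emph{\emph{D}}_{1,2}(b_{2,1,1})\,u_x$ and $\emph{\emph{D}}_{1,2}(b_{2,0,1})\,u_x$ that are absent when the coefficients depend on $x$ alone; these are precisely what produce the new $u_x^2$ term (and hence the extra condition (\ref{nleqa15})) together with the additional summand $b_{1,1,1}\emph{\emph{D}}_{1,2}(b_{2,0,1})\,u$ inside the coefficient of $u_x$. The accompanying justification that jet coordinates may be compared freely is routine but should be stated, since it is what converts the operator identity into the algebraic system (\ref{nleqa13})--(\ref{nleqa16}).
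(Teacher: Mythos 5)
Your proposal is correct and follows essentially the same route as the paper: expand $\mathcal{Q}_1(1)\mathcal{Q}_2(1)u$, keeping the chain-rule contributions $D_{1,2}(b_{2,0,1})\,u_x$ and $D_{1,2}(b_{2,1,1})\,u_x$ from the $u$-dependence of the coefficients, collect the result as a polynomial in $u$, $u_x$, $u_x^2$, $u_{2x}$, and identify coefficients with $\mathcal{P}(2)\,u$, the vanishing of the $u_x^2$ coefficient giving the extra condition (\ref{nleqa15}). The only difference is that you make explicit two points the paper leaves implicit, namely the justification of coefficient identification via the pointwise freedom of the $2$-jet and the cancellation of the common factor $u$ in the zeroth-order term.
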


\subsubsection{Necessary and sufficient conditions for the factorization of second order nonlinear PDEs with two independent variables}

Let $\Lambda$ and $\Lambda_0$ be two open subsets of $\mathbb{R}^2$ such that $\Lambda_0 \subset  \Lambda.$
Let $\Omega$ be an open subset of $\mathbb{R}.$
Consider the second order nonlinear partial differential operator
\begin{eqnarray}\label{nleqaa9}
\mathcal{P}(2)&=&\sum_{k=0}^{2}\sum_{h=1}^{p_k}g_{k,h}(x,\cdot)\emph{\emph{D}}_{k,h}\nonumber\\
&=& g_{0,1}(x,\cdot)\emph{\emph{D}}_{0,1}+g_{1,1}(x,\cdot)\emph{\emph{D}}_{1,1}+g_{1,2}(x,\cdot)\emph{\emph{D}}_{1,2}
+g_{2,1}(x,\cdot)\emph{\emph{D}}_{2,1}\nonumber\\
&+&g_{2,2}(x,\cdot)\emph{\emph{D}}_{2,2}+g_{2,3}(x,\cdot)\emph{\emph{D}}_{2,3}+g_{2,4}(x,\cdot)\emph{\emph{D}}_{2,4},
\end{eqnarray}
where $g_{k,h}\in \mathcal{C}(\Lambda\times\Omega,\mathbb{R})$ and $x=\left(x^1,x^2\right).$
Write $\mathcal{P}(2)$ in the form
\begin{eqnarray}\label{nleqaa10}
\mathcal{P}(2)&=&\mathcal{Q}_1(1)\cdot\mathcal{Q}_2(1)\nonumber\\
&=&\left[\sum_{k=0}^{1}\sum_{h=1}^{p_k}b_{1,k,h}(x,\cdot)\emph{\emph{D}}_{k,h}\right]
\left[\sum_{k=0}^{1}\sum_{h=1}^{p_k}b_{2,k,h}(x,\cdot)\emph{\emph{D}}_{k,h}\right]\nonumber\\
&=&\left[ b_{1,0,1}(x,\cdot)\emph{\emph{D}}_{0,1}+b_{1,1,1}(x,\cdot)\emph{\emph{D}}_{1,1} +b_{1,1,2}(x,\cdot)\emph{\emph{D}}_{1,2} \right]\nonumber\\
&\times& \left[  b_{2,0,1}(x,\cdot)\emph{\emph{D}}_{0,1}+b_{2,1,1}(x,\cdot)\emph{\emph{D}}_{1,1} +b_{2,1,2}(x,\cdot)\emph{\emph{D}}_{1,2}\right],
\end{eqnarray}
where $b_{1,k,h}\in \mathcal{C}(\Lambda\times\Omega,\mathbb{R})$  and  $b_{2,k,h}\in \mathcal{C}^{1}(\Lambda\times\Omega,\mathbb{R}).$
Let $u\in \mathcal{C}^2(\Lambda_0,\Omega).$ Then we have
\begin{eqnarray}\label{nleqaa11}
\mathcal{P}(2)\,u
&=& g_{0,1}(x,u)\,u+g_{1,1}(x,u)\,u_{x^1}+g_{1,2}(x,u)\,u_{x^2}
+g_{2,1}(x,u)\,u_{2x^1}\nonumber\\
&+&\left(g_{2,2}(x,u)+g_{2,3}(x,u)\right)\,u_{x^1x^2}+g_{2,4}(x,u)\,u_{2x^2}
\end{eqnarray}
and after expansion
\begin{eqnarray}\label{nleqaa12}
\mathcal{P}(2)\,u&=&\left[ b_{1,0,1}(x,\cdot)\emph{\emph{D}}_{0,1}+b_{1,1,1}(x,\cdot)\emph{\emph{D}}_{1,1} +b_{1,1,2}(x,\cdot)\emph{\emph{D}}_{1,2} \right]\nonumber\\
&\times& \left[  b_{2,0,1}(x,\cdot)\emph{\emph{D}}_{0,1}+b_{2,1,1}(x,\cdot)\emph{\emph{D}}_{1,1} +b_{2,1,2}(x,\cdot)\emph{\emph{D}}_{1,2}\right]u\nonumber\\
&=&\left[ b_{1,0,1}b_{2,0,1}+b_{1,1,1}\emph{\emph{D}}_{1,1}(b_{2,0,1})+b_{1,1,2}\emph{\emph{D}}_{1,2}(b_{2,0,1}) \right]\,u   \nonumber\\
&+&\left[  b_{1,0,1}b_{2,1,1}+b_{1,1,1}b_{2,0,1}+b_{1,1,1}\emph{\emph{D}}_{1,1}(b_{2,1,1})+b_{1,1,2}\emph{\emph{D}}_{1,2}(b_{2,1,1})\right.\nonumber\\
&+&\left.b_{1,1,1}\emph{\emph{D}}_{1,3}(b_{2,0,1})\,u\right]\,u_{x^1}
+\left[  b_{1,0,1}b_{2,1,2}+b_{1,1,2}b_{2,0,1}+b_{1,1,1}\emph{\emph{D}}_{1,1}(b_{2,1,2})\right.\nonumber\\
&+&\left.b_{1,1,2}\emph{\emph{D}}_{1,2}(b_{2,1,2})
+b_{1,1,2}\emph{\emph{D}}_{1,3}(b_{2,0,1})\,u\right]\,u_{x^2}+b_{1,1,1}\emph{\emph{D}}_{1,3}(b_{2,1,1})u_{x^1}^2\nonumber\\
&+& \left[ b_{1,1,1}\emph{\emph{D}}_{1,3}(b_{2,1,2})+b_{1,1,2}\emph{\emph{D}}_{1,3}(b_{2,1,1})   \right]\,u_{x^1}u_{x^2}  + b_{1,1,2}\emph{\emph{D}}_{1,3}(b_{2,1,2})u_{x^2}^2\nonumber\\
&+&b_{1,1,1}b_{2,1,1}\,u_{2x^1}+\left[ b_{1,1,2}b_{2,1,1}+b_{1,1,1}b_{2,1,2}  \right]\,u_{x^1x^2}+     b_{1,1,2}b_{2,1,2}  \,u_{2x^2}.
\end{eqnarray}
Identifying (\ref{nleqaa11}) with (\ref{nleqaa12}) yields
\begin{proposition}
A necessary and  sufficient condition for the differential operator $\mathcal{P}(2) $ defined by (\ref{nleqaa9})  be decomposed into
the form (\ref{nleqaa10}) is:
\begin{eqnarray}
g_{2,1} &=& b_{1,1,1}b_{2,1,1},
\label{nleqaa13}\\
g_{2,2}+ g_{2,3}&=& b_{1,1,2}b_{2,1,1}+b_{1,1,1}b_{2,1,2},\label{nleqaa14}\\
g_{2,4} &=& b_{1,1,2}b_{2,1,2},\label{nleqaa15}\\
g_{1,1} &=&b_{1,0,1}b_{2,1,1}+b_{1,1,1}b_{2,0,1}+\mathcal{L}(b_{2,1,1})+b_{1,1,1}\emph{\emph{D}}_{1,3}(b_{2,0,1})\,u,\label{nleqaa16}\\
g_{1,2} &=&b_{1,0,1}b_{2,1,2}+b_{1,1,2}b_{2,0,1}+\mathcal{L}(b_{2,1,2})+b_{1,1,2}\emph{\emph{D}}_{1,3}(b_{2,0,1})\,u,\label{nleqaa17}\\
g_{0,1} &=&b_{1,0,1}b_{2,0,1}+\mathcal{L}(b_{2,0,2}),\label{nleqaa18}\\
0&=&b_{1,1,2}\emph{\emph{D}}_{1,3}(b_{2,1,2}),\\
0&=&b_{1,1,1}\emph{\emph{D}}_{1,3}(b_{2,1,1}),\\
0&=& b_{1,1,1}\emph{\emph{D}}_{1,3}(b_{2,1,2})+b_{1,1,2}\emph{\emph{D}}_{1,3}(b_{2,1,1}),
\end{eqnarray}
where $\mathcal{L}=b_{1,1,1}\emph{\emph{D}}_{1,1}+b_{1,1,2}\emph{\emph{D}}_{1,2}.$
\end{proposition}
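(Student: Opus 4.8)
The plan is to establish the stated identity by direct expansion of the composite operator in (\ref{nleqaa10}) and subsequent coefficient matching against (\ref{nleqaa11}), exactly as in the three preceding propositions. The one genuinely new feature, inherited from the nonlinear ODE case, is that the coefficients $b_{i,k,h}$ now depend on $(x,u)$, so that the operators $\mathrm{D}_{1,1}$ and $\mathrm{D}_{1,2}$ appearing in $\mathcal{Q}_1(1)$ must be read as total derivatives $d/dx^1$ and $d/dx^2$; each time they fall on a $u$-dependent coefficient the chain rule produces an extra factor $u_{x^i}$ multiplied by the partial derivative $\partial/\partial u$, which we denote $\mathrm{D}_{1,3}$ (extending for $n=2$ the index convention by which $\mathrm{D}_{1,1},\mathrm{D}_{1,2}$ are $\partial_{x^1},\partial_{x^2}$ and the next index records differentiation in the dependent variable, as in the ODE case where $\mathrm{D}_{1,2}$ played this role).

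First I would apply the inner factor to obtain the intermediate function $w:=\mathcal{Q}_2(1)u = b_{2,0,1}\,u + b_{2,1,1}\,u_{x^1} + b_{2,1,2}\,u_{x^2}$. Then I would apply $\mathcal{Q}_1(1)=b_{1,0,1}\mathrm{D}_{0,1}+b_{1,1,1}\mathrm{D}_{1,1}+b_{1,1,2}\mathrm{D}_{1,2}$ to $w$, expanding $\mathrm{D}_{1,1}(w)$ and $\mathrm{D}_{1,2}(w)$ by the product and chain rules and carefully separating, for every coefficient $b_{2,\cdot}$, its $x$-partials (written $\mathrm{D}_{1,1}(b_{2,\cdot}),\mathrm{D}_{1,2}(b_{2,\cdot})$) from its $u$-partial (written $\mathrm{D}_{1,3}(b_{2,\cdot})$, which always carries a companion factor $u_{x^1}$ or $u_{x^2}$). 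Grouping the result by the differential monomials in $u$ reproduces (\ref{nleqaa12}): the six monomials $u,\,u_{x^1},\,u_{x^2},\,u_{2x^1},\,u_{x^1x^2},\,u_{2x^2}$ appear with the coefficients recorded on the right-hand sides of (\ref{nleqaa13})--(\ref{nleqaa18}), while the chain-rule contributions additionally generate the three quadratic monomials $u_{x^1}^2,\ u_{x^1}u_{x^2},\ u_{x^2}^2$ with coefficients $b_{1,1,1}\mathrm{D}_{1,3}(b_{2,1,1})$, $b_{1,1,1}\mathrm{D}_{1,3}(b_{2,1,2})+b_{1,1,2}\mathrm{D}_{1,3}(b_{2,1,1})$ and $b_{1,1,2}\mathrm{D}_{1,3}(b_{2,1,2})$ respectively. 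Abbreviating $\mathcal{L}=b_{1,1,1}\mathrm{D}_{1,1}+b_{1,1,2}\mathrm{D}_{1,2}$ puts the six linear coefficients into the compact form displayed in the statement.

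The decisive step is then the identification of (\ref{nleqaa11}) with (\ref{nleqaa12}). Since $\mathcal{P}(2)u$ contains none of the quadratic monomials $u_{x^1}^2,u_{x^1}u_{x^2},u_{x^2}^2$, equality for all admissible $u$ forces their three coefficients to vanish, yielding the three supplementary constraints, while matching the six linear monomials yields (\ref{nleqaa13})--(\ref{nleqaa18}); conversely, if all nine relations hold then (\ref{nleqaa12}) collapses exactly onto (\ref{nleqaa11}), giving sufficiency. The point requiring care --- and what I expect to be the main obstacle to a fully rigorous argument --- is the legitimacy of equating coefficients monomial by monomial. This rests on the fact that the nine jet expressions $u,u_{x^1},u_{x^2},u_{2x^1},u_{x^1x^2},u_{2x^2},u_{x^1}^2,u_{x^1}u_{x^2},u_{x^2}^2$ are functionally independent as functions on the $2$-jet: at any fixed point $x_0\in\Lambda_0$ one may choose $u\in\mathcal{C}^2(\Lambda_0,\Omega)$ realizing arbitrary prescribed values of $u(x_0)$, of the first partials $u_{x^1}(x_0),u_{x^2}(x_0)$, and of the (symmetric) second partials. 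Because the coefficient functions depend only on $(x,u)$ and are evaluated at $(x_0,u(x_0))$, while the first- and second-order jet values may be varied freely with $u(x_0)$ held fixed, the identity $(\ref{nleqaa11})=(\ref{nleqaa12})$ holds for every such $u$ if and only if the nine coefficient equations hold identically on $\Lambda\times\Omega$, which is precisely the asserted necessary and sufficient condition.
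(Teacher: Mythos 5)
Your proposal follows essentially the same route as the paper: expand $\mathcal{Q}_1(1)\cdot\mathcal{Q}_2(1)u$ via the product and chain rules (with $\emph{D}_{1,3}$ denoting $\partial/\partial u$), collect the coefficients of $u,\,u_{x^1},\,u_{x^2},\,u_{2x^1},\,u_{x^1x^2},\,u_{2x^2}$ and of the three quadratic monomials $u_{x^1}^2,\,u_{x^1}u_{x^2},\,u_{x^2}^2$, and identify with (\ref{nleqaa11}), which is exactly the computation the paper performs in (\ref{nleqaa12}). Your closing paragraph on the pointwise independence of the $2$-jet coordinates merely makes explicit the coefficient-matching step that the paper takes for granted, so the argument is correct and matches the paper's.
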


\subsection{Factorizations of systems of nonlinear  differential equations}
\subsubsection{Theoretical considerations and principles}

Let $\Lambda$ be an open subset of $\mathbb{R}^n$ and $\Omega,$ an open subset of $\mathbb{R}^m.$
Examine now the factorization process for systems of $s$-th order, $(s\geq 2),$ nonlinear
 differential equations with $n$ independent variables $x=\left(x^1, \cdots ,x^n\right)$
 and $m\geq 2$ dependent variables $u=\,^{t}\left(u^1, \cdots ,u^m\right),$ $u=u(x)$
 whose associated matrix operator, $\mathcal{M}(s),$ is of the form
 \begin{equation}\label{nleqb1}
\mathcal{M}(s)=\left[  \mathcal{R}_{p,q}\left(s_{p,q}\right)   \right]_{1\leq p,q\leq m};
 \end{equation}
the   $\mathcal{R}_{p,q}\left(s_{p,q}\right)$ are $s_{p,q}$-th order linear differential operators
 \begin{equation}\label{nleqb2}
\mathcal{R}_{p,q}\left(s_{p,q}\right)=\sum_{k=0}^{s_{p,q}}\sum_{h=1}^{p_k}f_{p,q,k,h}(x,\underbrace{\cdot,\ldots,\cdot}_{m\emph{\emph{-entries}}})\emph{\emph{D}}_{k,h},
 \end{equation}
where $f_{p,q,k,h}\in \mathcal{C}(\Lambda\times\Omega,\mathbb{R}),$ $s_{p,q}=s-1+\delta_{p,q},$ $\delta_{p,p}=1$ and $\delta_{p,q}= 0$ if $p\neq q.$  \\
Let $\Lambda$ and $\Lambda_0$ be two open subsets of $\mathbb{R}^n$ such that $\Lambda_0 \subset \Lambda.$ The matrix operator $\mathcal{M}(s)$ acts on a vector valued function $u=\,^{t}\left(u^1, \cdots ,u^m\right) \in \mathcal{C}^s(\Lambda_0,\Omega)$ as follows
$$
\mathcal{M}(s) \,u=\left[  \mathcal{R}_{p,q}\left(s_{p,q}\right)   \right]_{1\leq p,q\leq m}\,u=\left[  \sum_{q=1}^{m} \mathcal{R}_{p,q}\left(s_{p,q}\right)  \,u^q \right]_{1\leq p\leq m},
$$
with
\begin{equation}
\mathcal{R}_{p,q}\left(s_{p,q}\right)\,u^q=\sum_{k=0}^{s_{p,q}}\sum_{h=1}^{p_k}f_{p,q,k,h}\left(x,u^1,\ldots,u^m\right)\emph{\emph{D}}_{k,h}\,u^q.
 \end{equation}
The method of factorization consists in seeking a decomposition of the matrix   $\mathcal{M}(s)$ under the following form
 \begin{equation}\label{nleqb3}
\mathcal{M}(s)=\prod_{i=1}^{l}\mathcal{N}_i(s_i)
 \end{equation}
where
\begin{equation}\label{nleqb4}
\mathcal{N}_i(s_i)=\left[  \mathcal{T}_{i,p,q}\left(s_{i,p,q}\right)   \right]_{1\leq p,q\leq m}
 \end{equation}
  and
\begin{equation}\label{nleqb5}
\mathcal{T}_{i,p,q}\left(s_{i,p,q}\right)=\sum_{k=0}^{s_{i,p,q}}\sum_{h=1}^{p_k}a_{i,p,q,k,h}(x,\underbrace{\cdot,\ldots,\cdot}_{m\emph{\emph{-entries}}})\emph{\emph{D}}_{k,h},
 \end{equation}
with $\sum_{i=1}^l s_i=s,$   $s_{i,p,q}=s_i-1+\delta_{p,q},\,$
$a_{1,p,q,k,h} \in \mathcal{C}(\Lambda\times\Omega, \mathbb{R})$
 and  $a_{i,p,q,k,h} \in \mathcal{C}^{\sum_{j=1}^{i-1}s_{i,p,q}}(\Lambda\times\Omega, \mathbb{R}),$
$i=2,3, \cdots  ,l.$\\
Expanding (\ref{nleqb3}) leads to the relations between the unknown functions   $a_{i,p,q,k,h}$ of $\mathcal{N}_i(s_i)$
and the known functions $f_{p,q,k,h}$ of $\mathcal{M}(s).$

\subsubsection{Necessary and sufficient conditions for the factorization of systems of second order nonlinear ODEs}

Let  $\Lambda,$  $\Lambda_0$ be two open subsets of $\mathbb{R}$ such that $\Lambda_0 \subset \Lambda,$
and $\Omega$ an open subset of $\mathbb{R}^m.$
Consider the matrix operator
\begin{equation}\label{nleqbb1}
\mathcal{M}(2)=\left[  \mathcal{R}_{p,q}  \right]_{1\leq p,q\leq m},
 \end{equation}
where
\begin{eqnarray}\label{nleqbb2}
\mathcal{R}_{p,p}&=&\sum_{k=0}^{2}\sum_{h=1}^{p_k}f_{p,q,k,h}(x,\underbrace{\cdot,\ldots,\cdot}_{m\emph{\emph{-entries}}})\emph{\emph{D}}_{k,h}\\
&=&f_{p,p,0,1}(x,\underbrace{\cdot,\ldots,\cdot}_{m\emph{\emph{-entries}}})\emph{\emph{D}}_{0,1}+f_{p,p,1,1}(x,\underbrace{\cdot,\ldots,\cdot}_{m\emph{\emph{-entries}}})\emph{\emph{D}}_{1,1}+f_{p,p,2,1}(x,\underbrace{\cdot,\ldots,\cdot}_{m\emph{\emph{-entries}}})\emph{\emph{D}}_{2,1}\nonumber
 \end{eqnarray}
and for $p \neq q$
\begin{eqnarray}\label{nleqbb3}
\mathcal{R}_{p,q}&=&\sum_{k=0}^{1}\sum_{h=1}^{p_k}f_{p,q,k,h}(x,\underbrace{\cdot,\ldots,\cdot}_{m\emph{\emph{-entries}}})\emph{\emph{D}}_{k,h} \nonumber\\ &=&f_{p,q,0,1}(x,\underbrace{\cdot,\ldots,\cdot}_{m\emph{\emph{-entries}}})\emph{\emph{D}}_{0,1}+f_{p,q,1,1}(x,\underbrace{\cdot,\ldots,\cdot}_{m\emph{\emph{-entries}}})\emph{\emph{D}}_{1,1}
 \end{eqnarray}
with $f_{p,q,k,h}\in \mathcal{C}(\Lambda\times\Omega,\mathbb{R}),$ $x=x^1.$ Write $\mathcal{M}(2)$ in the form
\begin{equation}\label{nleqbb4}
\mathcal{M}(2)=\mathcal{N}_1(1)\cdot \mathcal{N}_2(1),
 \end{equation}
where
\begin{equation}\label{nleqbb5}
\mathcal{N}_i(1)=\left[  \mathcal{T}_{i,p,q}  \right]_{1\leq p,q\leq m}
 \end{equation}
with
\begin{eqnarray}\label{nleqbb6}
\mathcal{T}_{i,p,p}&=&\sum_{k=0}^{1}\sum_{h=1}^{p_k}a_{i,p,p,k,h}(x,\underbrace{\cdot,\ldots,\cdot}_{m\emph{\emph{-entries}}})\emph{\emph{D}}_{k,h}\nonumber\\ &=&a_{i,p,p,0,1}(x,\underbrace{\cdot,\ldots,\cdot}_{m\emph{\emph{-entries}}})\emph{\emph{D}}_{0,1}+a_{i,p,p,1,1}(x,\underbrace{\cdot,\ldots,\cdot}_{m\emph{\emph{-entries}}})\emph{\emph{D}}_{1,1}
 \end{eqnarray}
and for $p \neq q$
\begin{equation}\label{nleqbb7}
\mathcal{T}_{i,p,q} =a_{i,p,q,0,1}(x,\underbrace{\cdot,\ldots,\cdot}_{m\emph{\emph{-entries}}})\emph{\emph{D}}_{0,1},
 \end{equation}
$a_{1,p,q,k,h}\in \mathcal{C}(\Lambda\times\Omega,\mathbb{R})$ and $a_{2,p,q,k,h}\in \mathcal{C}^1(\Lambda\times\Omega,\mathbb{R}).$
Let $u =\,^{t}\left(u^1, \cdots ,u^m\right) \in \mathcal{C}^2(\Lambda_0,\Omega).$ Then we have
\begin{equation}\label{nleqbb8}
\mathcal{M}(2)\,u = \left[  \mathcal{R}_{p,q}  \right]_{1\leq p,q\leq m}\,u=\left[ \sum_{q=1}^m \mathcal{R}_{p,q}\,u^q  \right]_{1\leq p\leq m}
\end{equation}
where
$$  \mathcal{R}_{p,p}\,u^p= f_{p,p,0,1}(x,u)\,u^p+f_{p,p,1,1}(x,u)\,u^p_x+f_{p,p,2,1}(x,u)\,u^p_{2x} $$
and for $p \neq q$
$$ \mathcal{R}_{p,q}\,u^q =f_{p,q,0,1}(x,u)\,u^q+f_{p,q,1,1}(x,u)\,u^q_x.  $$
On the other hand, after expansion of (\ref{nleqbb4}), we have
\begin{equation}\label{nleqbb9}
\mathcal{M}(2)\,u = \left[  \widetilde{\mathcal{R}}_{p,q}  \right]_{1\leq p,q\leq m}\,u=\left[ \sum_{q=1}^m \widetilde{\mathcal{R}}_{p,q}\,u^q  \right]_{1\leq p\leq m},
\end{equation}
where
\begin{eqnarray}
\widetilde{\mathcal{R}}_{p,p}\,u^p&=&a_{1,p,p,1,1}a_{2,p,p,1,1}\,u^p_{2x} +a_{1,p,p,1,1}\sum_{\widetilde{h}=1}^m\emph{\emph{D}}_{1,\widetilde{h}+1}(a_{2,p,p,1,1})\,u^{\widetilde{h}}_{x}\,u^p_x\nonumber\\
&+&a_{1,p,p,1,1}\sum_{^{\widetilde{h}=1}_{\widetilde{h}\neq p}}^m\emph{\emph{D}}_{1,\widetilde{h}+1}(a_{2,p,p,0,1})\,u^{\widetilde{h}}_{x}\,u^p
+ \left[a_{1,p,p,0,1}a_{2,p,p,1,1}+a_{1,p,p,1,1}a_{2,p,p,0,1}\right.\nonumber\\
&+&\left.a_{1,p,p,1,1}\emph{\emph{D}}_{1,1}(a_{2,p,p,1,1})+a_{1,p,p,1,1}\emph{\emph{D}}_{1,p+1}(a_{2,p,p,0,1})\,u^p \right]u^p_x\nonumber\\
&+&\left[\sum_{l=1}^m a_{1,p,l,0,1}a_{2,l,p,0,1}+a_{1,p,p,1,1}\emph{\emph{D}}_{1,1}(a_{2,p,p,0,1}) \right]u^p\nonumber
\end{eqnarray}
and for $p \neq q$
\begin{eqnarray}
\widetilde{\mathcal{R}}_{p,q}\,u^q&=&\left[a_{1,p,p,1,1}a_{2,p,q,0,1}+a_{1,p,q,0,1}a_{2,q,q,1,1}+a_{1,p,p,1,1}\emph{\emph{D}}_{1,q+1}(a_{2,p,q,0,1})\,u^q \right]u^q_x\nonumber\\
&+&a_{1,p,p,1,1}\sum_{^{\widetilde{h}=1}_{\widetilde{h}\neq q}}^m\emph{\emph{D}}_{1,\widetilde{h}+1}(a_{2,p,q,0,1})\,u^{\widetilde{h}}_{x}\,u^q\nonumber\\
&+&\left[\sum_{l=1}^m a_{1,p,l,0,1}a_{2,l,q,0,1}+a_{1,p,p,1,1}\emph{\emph{D}}_{1,1}(a_{2,p,q,0,1}) \right]u^q.\nonumber
\end{eqnarray}
Identifying (\ref{nleqbb8}) with (\ref{nleqbb9}) yields
\begin{proposition}
A necessary and  sufficient condition for the differential operator $\mathcal{M}(2) $ defined by (\ref{nleqbb1})  be decomposed into
the form (\ref{nleqbb4}) is:
\begin{eqnarray}
f_{p,p,0,1}&=&\sum_{l=1}^m a_{1,p,l,0,1}a_{2,l,p,0,1}+a_{1,p,p,1,1}\emph{\emph{D}}_{1,1}(a_{2,p,p,0,1}),\\
f_{p,p,1,1}&=& a_{1,p,p,0,1}a_{2,p,p,1,1}+a_{1,p,p,1,1}a_{2,p,p,0,1}\nonumber\\
&+&a_{1,p,p,1,1}\emph{\emph{D}}_{1,1}(a_{2,p,p,1,1})+a_{1,p,p,1,1}\emph{\emph{D}}_{1,p+1}(a_{2,p,p,0,1})\,u^p,\\
f_{p,p,2,1}    &=&a_{1,p,p,1,1}a_{2,p,p,1,1},\\
0&=&\emph{\emph{D}}_{1,\widetilde{h}+1}(a_{2,p,p,0,1}),  \quad \widetilde{h}\in\{ 1,2,\cdots,m  \}\setminus\{p\}, \\
0&=&\emph{\emph{D}}_{1,\widetilde{h}+1}(a_{2,p,p,1,1}),\quad \widetilde{h}=1,2,\cdots,m
\end{eqnarray}
and for $p \neq q$
\begin{eqnarray}
f_{p,q,0,1}&=&\sum_{l=1}^m a_{1,p,l,0,1}a_{2,l,q,0,1}+a_{1,p,p,1,1}\emph{\emph{D}}_{1,1}(a_{2,p,q,0,1}),\\
f_{p,q,1,1}   &=& a_{1,p,p,1,1}a_{2,p,q,0,1}+a_{1,p,q,0,1}a_{2,q,q,1,1}+a_{1,p,p,1,1}\emph{\emph{D}}_{1,q+1}(a_{2,p,q,0,1})\,u^q,\\
0&=&\emph{\emph{D}}_{1,\widetilde{h}+1}(a_{2,p,q,0,1}),  \quad \widetilde{h}\in\{ 1,2,\cdots,m  \}\setminus\{q\}.
\end{eqnarray}
\end{proposition}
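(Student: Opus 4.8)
The plan is to follow exactly the scheme used for the scalar nonlinear ODE in~(\ref{nleqa11})--(\ref{nleqa16}) and for the linear system in~(\ref{eqbb8})--(\ref{eqbb9}), the only new ingredient being the interaction of the matrix structure with the dependence of the coefficients on $u={}^t(u^1,\ldots,u^m)$. First I would record $\mathcal{M}(2)u$ componentwise, as already displayed in~(\ref{nleqbb8}): its $p$-th component is $\mathcal{R}_{p,p}u^p+\sum_{q\neq p}\mathcal{R}_{p,q}u^q$, the diagonal piece carrying the genuine second-order terms and the off-diagonal pieces being purely first order. I would then expand the factored operator by ordinary matrix multiplication of operator-valued entries, writing the $(p,q)$ block of $\mathcal{N}_1(1)\cdot\mathcal{N}_2(1)$ as $\widetilde{\mathcal{R}}_{p,q}=\sum_{l=1}^m\mathcal{T}_{1,p,l}\circ\mathcal{T}_{2,l,q}$ and applying it to $u^q$.

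The core computation is the expansion of each composition $\mathcal{T}_{1,p,l}\circ\mathcal{T}_{2,l,q}\,u^q$. Since the off-diagonal entries are of order zero by~(\ref{nleqbb7}), the only second-order contribution to $\widetilde{\mathcal{R}}_{p,p}$ comes from $\mathcal{T}_{1,p,p}\circ\mathcal{T}_{2,p,p}$, whereas the terms with $l\neq p$ feed only the coefficient of $u^p$, assembling the sum $\sum_{l=1}^m a_{1,p,l,0,1}a_{2,l,p,0,1}$. The delicate step is applying $a_{1,p,p,1,1}\,\emph{\emph{D}}_{1,1}$ to $\mathcal{T}_{2,p,p}u^p=a_{2,p,p,0,1}u^p+a_{2,p,p,1,1}u^p_x$: here $\emph{\emph{D}}_{1,1}$ acts as the total derivative in $x$, so the product rule together with the chain rule through the nonlinear coefficients $a_{2,p,p,\cdot}(x,u^1,\ldots,u^m)$ produces both the independent-variable pieces $\emph{\emph{D}}_{1,1}(a_{2,p,p,\cdot})$ and the dependent-variable pieces $\sum_{\tilde h=1}^m\emph{\emph{D}}_{1,\tilde h+1}(a_{2,p,p,\cdot})\,u^{\tilde h}_x$; the off-diagonal blocks are treated identically, with $q$ in place of the repeated index.

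Finally I would view both $\mathcal{M}(2)u$ and its expansion as polynomials in the higher-order jet coordinates $u^j_x,u^j_{2x}$ with coefficients depending on $(x,u)$, and identify~(\ref{nleqbb8}) with~(\ref{nleqbb9}) coefficientwise; because $u$ ranges over an open set of $\mathcal{C}^2(\Lambda_0,\Omega)$ functions these coordinates are functionally independent, so this identification is equivalent to the proposition and yields necessity and sufficiency simultaneously. The monomials $u^p_{2x}$, $u^p_x$, $u^p$ (and off-diagonal $u^q_x$, $u^q$) reproduce the equations for the $f$'s; note in particular that a chain-rule term $\emph{\emph{D}}_{1,p+1}(a_{2,p,p,0,1})\,u^p u^p_x$ is \emph{not} spurious, since $f_{p,p,1,1}(x,u)$ may depend on $u$, and is legitimately absorbed as the $\emph{\emph{D}}_{1,p+1}(a_{2,p,p,0,1})\,u^p$ summand in the coefficient of $u^p_x$ (and likewise $\emph{\emph{D}}_{1,q+1}(a_{2,p,q,0,1})\,u^q$ off the diagonal). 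Every remaining cross term — $u^{\tilde h}_x u^p$ with $\tilde h\neq p$, each $u^{\tilde h}_x u^p_x$, and the off-diagonal $u^{\tilde h}_x u^q$ with $\tilde h\neq q$ — has no counterpart in $\mathcal{M}(2)$, so its coefficient $a_{1,p,p,1,1}\,\emph{\emph{D}}_{1,\tilde h+1}(\cdot)$ must vanish, giving, after cancelling the nonzero leading factor $a_{1,p,p,1,1}$, exactly the stated constraints $\emph{\emph{D}}_{1,\tilde h+1}(a_{2,p,p,0,1})=0$ ($\tilde h\neq p$), $\emph{\emph{D}}_{1,\tilde h+1}(a_{2,p,p,1,1})=0$ (all $\tilde h$), and $\emph{\emph{D}}_{1,\tilde h+1}(a_{2,p,q,0,1})=0$ ($\tilde h\neq q$). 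I expect the only real difficulty to lie in this bookkeeping, chiefly the careful separation of the legitimate $\tilde h=p$ (respectively $\tilde h=q$) contribution from the spurious ones.
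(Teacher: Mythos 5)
Your proposal is correct and follows essentially the same route as the paper: expand the matrix product $\mathcal{N}_1(1)\cdot\mathcal{N}_2(1)$ blockwise as $\sum_l \mathcal{T}_{1,p,l}\circ\mathcal{T}_{2,l,q}$, apply the chain rule through the $u$-dependence of the second-factor coefficients to generate the $\emph{D}_{1,\tilde h+1}$ terms, and identify coefficients of the independent jet monomials $u^p_{2x}$, $u^p_x$, $u^p$, $u^{\tilde h}_x u^p_x$, $u^{\tilde h}_x u^p$ with those of $\mathcal{M}(2)\,u$ — which is exactly the content of the paper's displays (\ref{nleqbb8})--(\ref{nleqbb9}) followed by "identifying." Your explicit remarks on absorbing the $\tilde h=p$ (resp.\ $\tilde h=q$) chain-rule term into the $u$-dependent coefficient $f_{p,p,1,1}$ and on cancelling the nonzero factor $a_{1,p,p,1,1}$ before stating the vanishing constraints are, if anything, slightly more careful than the paper's tacit treatment.
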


\subsubsection{Necessary and sufficient conditions for the factorization of systems of second order nonlinear PDEs with two independent variables}

Let $\Lambda,$  $\Lambda_0$ be two open subsets of $\mathbb{R}^2$ such that $\Lambda_0 \subset \Lambda,$ and
$\Omega$ an open subset of $\mathbb{R}^m.$
Consider the matrix operator
\begin{equation}\label{nleqbbb1}
\mathcal{M}(2)=\left[  \mathcal{R}_{p,q}  \right]_{1\leq p,q\leq m},
 \end{equation}
where
\begin{eqnarray}\label{nleqbbb2}
\mathcal{R}_{p,p}&=&\sum_{k=0}^{2}\sum_{h=1}^{p_k}f_{p,q,k,h}(x,\underbrace{\cdot,\ldots,\cdot}_{m\emph{\emph{-entries}}})\emph{\emph{D}}_{k,h}\\ &=&f_{p,p,0,1}(x,\underbrace{\cdot,\ldots,\cdot}_{m\emph{\emph{-entries}}})\emph{\emph{D}}_{0,1}+f_{p,p,1,1}(x,\underbrace{\cdot,\ldots,\cdot}_{m\emph{\emph{-entries}}})\emph{\emph{D}}_{1,1}
+f_{p,p,1,2}(x,\underbrace{\cdot,\ldots,\cdot}_{m\emph{\emph{-entries}}})\emph{\emph{D}}_{1,2}\nonumber\\
&+&f_{p,p,2,1}(x,\underbrace{\cdot,\ldots,\cdot}_{m\emph{\emph{-entries}}})\emph{\emph{D}}_{2,1}
+f_{p,p,2,2}(x,\underbrace{\cdot,\ldots,\cdot}_{m\emph{\emph{-entries}}})\emph{\emph{D}}_{2,2}\nonumber\\
&+&f_{p,p,2,3}(x,\underbrace{\cdot,\ldots,\cdot}_{m\emph{\emph{-entries}}})\emph{\emph{D}}_{2,3}
+f_{p,p,2,4}(x,\underbrace{\cdot,\ldots,\cdot}_{m\emph{\emph{-entries}}})\emph{\emph{D}}_{2,4}\nonumber
 \end{eqnarray}
and for $p \neq q$
\begin{eqnarray}\label{nleqbbb3}
\mathcal{R}_{p,q}&=&\sum_{k=0}^{1}\sum_{h=1}^{p_k}f_{p,q,k,h}(x,\underbrace{\cdot,\ldots,\cdot}_{m\emph{\emph{-entries}}})\emph{\emph{D}}_{k,h}\\
&=&f_{p,q,0,1}(x,\underbrace{\cdot,\ldots,\cdot}_{m\emph{\emph{-entries}}})\emph{\emph{D}}_{0,1}+f_{p,q,1,1}(x,\underbrace{\cdot,\ldots,\cdot}_{m\emph{\emph{-entries}}})\emph{\emph{D}}_{1,1}
+f_{p,q,1,2}(x,\underbrace{\cdot,\ldots,\cdot}_{m\emph{\emph{-entries}}})\emph{\emph{D}}_{1,2}\nonumber
 \end{eqnarray}
with $f_{p,q,k,h}\in \mathcal{C}(\Lambda\times\Omega,\mathbb{R}),$ $x= \left(x^1,x^2\right).$
Write $\mathcal{M}(2)$ in the form
\begin{equation}\label{nleqbbb4}
\mathcal{M}(2)=\mathcal{N}_1(1)\cdot \mathcal{N}_2(1),
 \end{equation}
where
\begin{equation}\label{nleqbbb5}
\mathcal{N}_i(1)=\left[  \mathcal{T}_{i,p,q}  \right]_{1\leq p,q\leq m}
 \end{equation}
with
\begin{eqnarray}\label{nleqbbb6}
\mathcal{T}_{i,p,p}&=&\sum_{k=0}^{1}\sum_{h=1}^{p_k}a_{i,p,p,k,h}(x\underbrace{\cdot,\ldots,\cdot}_{m\emph{\emph{-entries}}})\emph{\emph{D}}_{k,h}\\ &=&a_{i,p,p,0,1}(x\underbrace{\cdot,\ldots,\cdot}_{m\emph{\emph{-entries}}})\emph{\emph{D}}_{0,1}+a_{i,p,p,1,1}(x\underbrace{\cdot,\ldots,\cdot}_{m\emph{\emph{-entries}}})\emph{\emph{D}}_{1,1}
+a_{i,p,p,1,2}(x\underbrace{\cdot,\ldots,\cdot}_{m\emph{\emph{-entries}}})\emph{\emph{D}}_{1,2}\nonumber
 \end{eqnarray}
and for $p \neq q$
\begin{equation}\label{nleqbbb7}
\mathcal{T}_{i,p,q} =a_{i,p,q,0,1}(x\underbrace{\cdot,\ldots,\cdot}_{m\emph{\emph{-entries}}})\emph{\emph{D}}_{0,1},
 \end{equation}
$a_{1,p,q,k,h}\in \mathcal{C}(\Lambda\times\Omega,\mathbb{R})$ and $a_{2,p,q,k,h}\in \mathcal{C}^1(\Lambda\times\Omega,\mathbb{R}).$
Let $u =\,^{t}\left(u^1, \cdots ,u^m\right) \in \mathcal{C}^2(\Lambda_0,\Omega).$ Then we have
\begin{equation}\label{nleqbbb8}
\mathcal{M}(2)\,u = \left[  \mathcal{R}_{p,q}  \right]_{1\leq p,q\leq m}\,u=\left[ \sum_{q=1}^m \mathcal{R}_{p,q}\,u^q  \right]_{1\leq p\leq m},
\end{equation}
where
\begin{eqnarray}
  \mathcal{R}_{p,p}\,u^p&=& f_{p,p,0,1}\,u^p+f_{p,p,1,1}\,u^p_{x^1}+f_{p,p,1,2}\,u^p_{x^2}\nonumber\\
  &+&f_{p,p,2,1}\,u^p_{2x^1}
+\left(f_{p,p,2,2}+f_{p,p,2,3}\right)\,u^p_{x^1x^2}+f_{p,p,2,4}\,u^p_{2x^2}
\end{eqnarray}
and for $p \neq q$
$$ \mathcal{R}_{p,q}\,u^q =f_{p,q,0,1}\,u^q+f_{p,q,1,1}\,u^q_{x^1}+f_{p,q,1,2}\,u^q_{x^2}.  $$
On the other hand, after expansion of (\ref{nleqbbb4}), we have
\begin{equation}\label{nleqbbb9}
\mathcal{M}(2)\,u = \left[  \widetilde{\mathcal{R}}_{p,q}  \right]_{1\leq p,q\leq m}\,u=\left[ \sum_{q=1}^m \widetilde{\mathcal{R}}_{p,q}\,u^q  \right]_{1\leq p\leq m},
\end{equation}
where
\begin{eqnarray}
\widetilde{\mathcal{R}}_{p,p}\,u^p&=&a_{1,p,p,1,1}a_{2,p,p,1,1}\,u^p_{2x^1}+\left(a_{1,p,p,1,2}a_{2,p,p,1,1}+a_{1,p,p,1,1}a_{2,p,p,1,2}\right)\,u^p_{x^1x^2}\nonumber\\
&+&a_{1,p,p,1,2}a_{2,p,p,1,2}\,u^p_{2x^2} \left[a_{1,p,p,0,1}a_{2,p,p,1,1}+a_{1,p,p,1,1}a_{2,p,p,0,1}\right.\nonumber\\
&+&\left.\mathcal{L}_p(a_{2,p,p,1,1})+a_{1,p,p,1,1}\emph{\emph{D}}_{1,p+2}(a_{2,p,p,0,1})\,u^p \right]u^p_{x^1}
+\left[a_{1,p,p,0,1}a_{2,p,p,1,2}\right.\nonumber\\
&+&\left.a_{1,p,p,1,2}a_{2,p,p,0,1}+\mathcal{L}_p(a_{2,p,p,1,2})+a_{1,p,p,1,2}\emph{\emph{D}}_{1,p+2}(a_{2,p,p,0,1})\,u^p \right]u^p_{x^2}\nonumber\\
&+&   a_{1,p,p,1,1}\sum_{\widetilde{h}=1}^m\emph{\emph{D}}_{1,\widetilde{h}+2}(a_{2,p,p,1,1})\,u^{\widetilde{h}}_{x^1}\,u^p_{x^1}           +a_{1,p,p,1,1}\sum_{\widetilde{h}=1}^m\emph{\emph{D}}_{1,\widetilde{h}+2}(a_{2,p,p,1,2})\,u^{\widetilde{h}}_{x^1}\,u^p_{x^2}\nonumber\\
&+&   a_{1,p,p,1,2}\sum_{\widetilde{h}=1}^m\emph{\emph{D}}_{1,\widetilde{h}+2}(a_{2,p,p,1,1})\,u^{\widetilde{h}}_{x^2}\,u^p_{x^1}           +a_{1,p,p,1,2}\sum_{\widetilde{h}=1}^m\emph{\emph{D}}_{1,\widetilde{h}+2}(a_{2,p,p,1,2})\,u^{\widetilde{h}}_{x^2}\,u^p_{x^2}\nonumber\\
&+&  a_{1,p,p,1,1}\sum_{^{\widetilde{h}=1}_{\widetilde{h}\neq p}}^m\emph{\emph{D}}_{1,\widetilde{h}+2}(a_{2,p,p,0,1})\,u^{\widetilde{h}}_{x^1}\,u^p  +a_{1,p,p,1,2}\sum_{^{\widetilde{h}=1}_{\widetilde{h}\neq p}}^m\emph{\emph{D}}_{1,\widetilde{h}+2}(a_{2,p,p,0,1})\,u^{\widetilde{h}}_{x^2}\,u^p          \nonumber\\
&+&\left[\sum_{l=1}^m a_{1,p,l,0,1}a_{2,l,p,0,1}+\mathcal{L}_p(a_{2,p,p,0,1}) \right]u^p\nonumber
\end{eqnarray}
and for $p \neq q$
\begin{eqnarray}
\widetilde{\mathcal{R}}_{p,q}\,u^q&=&\left[a_{1,p,p,1,1}a_{2,p,q,0,1}+a_{1,p,q,0,1}a_{2,q,q,1,1}+a_{1,p,p,1,1}\emph{\emph{D}}_{1,q+2}(a_{2,p,q,0,1})\,u^q \right]u^q_{x^1}  \nonumber\\
&+&\left[a_{1,p,p,1,2}a_{2,p,q,0,1}+a_{1,p,q,0,1}a_{2,q,q,1,2}+a_{1,p,p,1,2}\emph{\emph{D}}_{1,q+2}(a_{2,p,q,0,1})\,u^q \right]u^q_{x^2}   \nonumber\\
 &+&a_{1,p,p,1,1}\sum_{^{\widetilde{h}=1}_{\widetilde{h}\neq q}}^m\emph{\emph{D}}_{1,\widetilde{h}+2}(a_{2,p,q,0,1})\,u^{\widetilde{h}}_{x^1}\,u^q    +a_{1,p,p,1,2}\sum_{^{\widetilde{h}=1}_{\widetilde{h}\neq q}}^m\emph{\emph{D}}_{1,\widetilde{h}+2}(a_{2,p,q,0,1})\,u^{\widetilde{h}}_{x^2}\,u^q \nonumber\\
&+&\left[\sum_{l=1}^m a_{1,p,l,0,1}a_{2,l,q,0,1}+\mathcal{L}_p(a_{2,p,q,0,1}) \right]u^q,\nonumber
\end{eqnarray}
where $\mathcal{L}_p=a_{1,p,p,1,1}\emph{\emph{D}}_{1,1}+a_{1,p,p,1,2}\emph{\emph{D}}_{1,2}.$
Identifying (\ref{nleqbbb8}) with (\ref{nleqbbb9}) yields
\begin{proposition}
A necessary and  sufficient condition for the differential operator $\mathcal{M}(2) $ defined by (\ref{nleqbbb1})  be decomposed into
the form (\ref{nleqbbb4}) is:
\begin{eqnarray}
f_{p,p,0,1}&=&\sum_{l=1}^m a_{1,p,l,0,1}a_{2,l,p,0,1}+\mathcal{L}_p(a_{2,p,p,0,1}),\\
f_{p,p,1,1}&=& a_{1,p,p,0,1}a_{2,p,p,1,1}+a_{1,p,p,1,1}a_{2,p,p,0,1}\nonumber\\
&+&\mathcal{L}_p(a_{2,p,p,1,1})+a_{1,p,p,1,1}\emph{\emph{D}}_{1,p+2}(a_{2,p,p,0,1})\,u^p,\\
f_{p,p,1,2}&=& a_{1,p,p,0,1}a_{2,p,p,1,2}+a_{1,p,p,1,2}a_{2,p,p,0,1}\nonumber\\
&+&\mathcal{L}_p(a_{2,p,p,1,2})+a_{1,p,p,1,2}\emph{\emph{D}}_{1,p+2}(a_{2,p,p,0,1})\,u^p,\\
f_{p,p,2,1}    &=&a_{1,p,p,1,1}a_{2,p,p,1,1},\\
f_{p,p,2,2}+f_{p,p,2,3}    &=&a_{1,p,p,1,2}a_{2,p,p,1,1}+a_{1,p,p,1,1}a_{2,p,p,1,2},\\
f_{p,p,2,4}    &=&a_{1,p,p,1,2}a_{2,p,p,1,2},\\
0&=&\emph{\emph{D}}_{1,\widetilde{h}+2}(a_{2,p,p,0,1}),  \quad \widetilde{h}\in\{ 1,2,\cdots,m  \}\setminus\{p\}, \\
0&=&\emph{\emph{D}}_{1,\widetilde{h}+2}(a_{2,p,p,1,1}),\quad \widetilde{h}=1,2,\cdots,m,\\
0&=&\emph{\emph{D}}_{1,\widetilde{h}+2}(a_{2,p,p,1,2}),\quad \widetilde{h}=1,2,\cdots,m
\end{eqnarray}
and for $p \neq q$
\begin{eqnarray}
f_{p,q,0,1}&=&\sum_{l=1}^m a_{1,p,l,0,1}a_{2,l,q,0,1}+\mathcal{L}_p(a_{2,p,q,0,1}),\\
f_{p,q,1,1}   &=& a_{1,p,p,1,1}a_{2,p,q,0,1}+a_{1,p,q,0,1}a_{2,q,q,1,1}+a_{1,p,p,1,1}\emph{\emph{D}}_{1,q+2}(a_{2,p,q,0,1})\,u^q,\\
f_{p,q,1,2}   &=& a_{1,p,p,1,2}a_{2,p,q,0,1}+a_{1,p,q,0,1}a_{2,q,q,1,2}+a_{1,p,p,1,2}\emph{\emph{D}}_{1,q+2}(a_{2,p,q,0,1})\,u^q,\\
0&=&\emph{\emph{D}}_{1,\widetilde{h}+2}(a_{2,p,q,0,1}),  \quad \widetilde{h}\in\{ 1,2,\cdots,m  \}\setminus\{q\}.
\end{eqnarray}
\end{proposition}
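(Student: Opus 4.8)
The plan is to follow exactly the strategy that produced all the preceding propositions: expand the matrix product $\mathcal{N}_1(1)\cdot\mathcal{N}_2(1)$ explicitly, apply it to an arbitrary vector field $u$, and then identify the outcome with $\mathcal{M}(2)\,u$ coefficient by coefficient. The genuinely new feature relative to the linear case is that each coefficient $a_{i,p,q,k,h}$ now depends on $(x^1,x^2,u^1,\dots,u^m)$, so that the first–order pieces $\mathrm{D}_{1,1}$ and $\mathrm{D}_{1,2}$ of the outer factor, when they act on such a coefficient, generate chain–rule contributions. These are precisely the terms recorded through the operators $\mathrm{D}_{1,\tilde{h}+2}$, which in the present indexing encode the partial derivatives $\partial/\partial u^{\tilde{h}}$, $\tilde{h}=1,\dots,m$, whereas $\mathrm{D}_{1,1}$ and $\mathrm{D}_{1,2}$ acting on a coefficient denote its \emph{partial} derivatives with respect to $x^1,x^2$.

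First I would compute the $p$-th component of $\mathcal{N}_1(1)\cdot\mathcal{N}_2(1)\,u$ by writing it as $\mathcal{T}_{1,p,p}\bigl(\sum_{q}\mathcal{T}_{2,p,q}u^q\bigr)+\sum_{l\neq p}a_{1,p,l,0,1}\sum_{q}\mathcal{T}_{2,l,q}u^q$, exploiting that the off–diagonal factors of $\mathcal{N}_1(1)$ reduce to multiplication by $a_{1,p,l,0,1}$. Applying the first–order operator $\mathcal{T}_{1,p,p}=a_{1,p,p,0,1}+a_{1,p,p,1,1}\mathrm{D}_{1,1}+a_{1,p,p,1,2}\mathrm{D}_{1,2}$ to each product $a_{2,\ldots}\,\mathrm{D}_{k,h}u^q$ by the Leibniz rule, and then expanding the total $x^j$–derivative of every coefficient as $\mathrm{D}_{1,j}(a_{2,\ldots})+\sum_{\tilde{h}}\mathrm{D}_{1,\tilde{h}+2}(a_{2,\ldots})\,u^{\tilde{h}}_{x^j}$, reproduces exactly the expressions for $\widetilde{\mathcal{R}}_{p,p}\,u^p$ and $\widetilde{\mathcal{R}}_{p,q}\,u^q$ displayed just above the statement.

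Next I would collect the resulting terms according to the jet monomial they multiply: the pure second–order derivatives $u^p_{2x^1},u^p_{x^1x^2},u^p_{2x^2}$; the first–order derivatives $u^p_{x^1},u^p_{x^2},u^q_{x^1},u^q_{x^2}$; the zeroth–order term $u^p$; and the genuinely nonlinear cross terms $u^{\tilde{h}}_{x^j}u^p_{x^k}$ and $u^{\tilde{h}}_{x^j}u^p$ manufactured by the chain rule. Since $u\in\mathcal{C}^2(\Lambda_0,\Omega)$ is arbitrary, at any point of $\Lambda_0$ the values of $u$, of its first derivatives $u^{\tilde{h}}_{x^j}$, and of its independent second derivatives may be prescribed freely; hence these monomials are linearly independent, and the identification of (\ref{nleqbbb8}) with (\ref{nleqbbb9}) is equivalent to equating the coefficient functions one at a time. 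The second–order slots yield the relations for $f_{p,p,2,1}$, $f_{p,p,2,2}+f_{p,p,2,3}$, $f_{p,p,2,4}$; the first– and zeroth–order slots yield those for $f_{p,p,1,1}$, $f_{p,p,1,2}$, $f_{p,p,0,1}$ together with their off–diagonal counterparts; and, because $\mathcal{M}(2)$ carries no nonlinear cross terms, the coefficients of every $u^{\tilde{h}}_{x^j}u^p_{x^k}$ and $u^{\tilde{h}}_{x^j}u^p$ must vanish, which delivers exactly the constraint equations involving $\mathrm{D}_{1,\tilde{h}+2}$.

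The Leibniz expansion and the sorting of terms are routine; the sole point requiring care is the bookkeeping of the chain–rule contributions, namely tracking which $u$–derivative of which coefficient attaches to which jet monomial, so that the spurious cross terms cancel correctly and the list of vanishing conditions emerges complete and without omission. I expect this organizational step, rather than any conceptual difficulty, to be the principal obstacle.
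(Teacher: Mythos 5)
Your proposal follows exactly the paper's own route: expand the matrix product $\mathcal{N}_1(1)\cdot\mathcal{N}_2(1)$ applied to $u$ via the Leibniz and chain rules (with $\mathrm{D}_{1,\widetilde{h}+2}$ recording the $u^{\widetilde{h}}$-derivatives of the coefficients), collect terms by jet monomial, and identify coefficients with those of $\mathcal{M}(2)\,u$; the paper presents precisely this expansion as $\widetilde{\mathcal{R}}_{p,p}\,u^p$ and $\widetilde{\mathcal{R}}_{p,q}\,u^q$ and then reads off the proposition. Your added remark that the jet monomials can be prescribed independently at a point (justifying the coefficient-wise identification needed for necessity) is a detail the paper leaves implicit, but the argument is the same.
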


\subsection*{Acknowledgments}
This work is partially supported by the ICTP through the
OEA-ICMPA-Prj-15. The ICMPA is in partnership with
the Daniel Iagolnitzer Foundation (DIF), France.

\label{lastpage-01}
\end{document}